\theoremstyle{plain}
\newtheorem{theorem}{Theorem}[section]
\newtheorem*{theorem*}{Theorem}
\newtheorem{lemma}[theorem]{Lemma}
\newtheorem{example}[theorem]{Example}
\newtheorem{corollary}[theorem]{Corollary}
\newtheorem{proposition}[theorem]{Proposition}
\newtheorem*{proposition*}{Proposition}
\newtheorem*{conjecture*}{Conjecture}
\theoremstyle{definition}
\newtheorem{definition}[theorem]{Definition}
\theoremstyle{remark}
\newtheorem{remark}[theorem]{Remark}
\newtheorem*{remark*}{Remark}
\renewcommand{\eqref}[1]{\textnormal{(\ref{#1})}}
\numberwithin{equation}{section}
\newcommand{\abs}[1]{{\left\lvert #1 \right\rvert}}
\newcommand{\norm}[1]{{\left\lVert #1 \right\rVert}}
\newcommand{\R}{\mathbb{R}}
\newcommand{\C}{\mathbb{C}}
\title[Scattering by curvatures]{Scattering by curvatures, radiationless sources, transmission eigenfunctions and inverse scattering problems}
\author{Emilia Bl{\aa}sten}
\address{Department of Mathematics and Systems Analysis, Aalto University, FI-00076 Aalto, Finland.}
\email{emilia.blasten@iki.fi}
\author{Hongyu Liu}
\address{Department of Mathematics, City University of Hong Kong, Kowloon, Hong Kong SAR, China.}
\email{hongyu.liuip@gmail.com; hongyliu@cityu.edu.hk}
\begin{document}

\begin{abstract}

We consider several intriguingly connected topics in the theory of
wave propagation: geometrical characterizations of radiationless
sources, non-radiating incident waves, interior transmission
eigenfunctions, and their applications to inverse scattering. Our
major novel discovery is a localization and geometrization property.

We first show that a scatterer, which might be an active source or an
inhomogeneous index of refraction, cannot be completely invisible if
its support is small compared to the wavelength and scattering
intensity. Next, we localize and geometrize the ``smallness'' results
to the case where there is a high-curvature point on the boundary of
the scatterer's support. We derive explicit bounds between the
intensity of an invisible scatterer and its diameter or its curvature
at the aforementioned point. These results can be used to characterize
radiationless sources or non-radiating waves near high-curvature
points.

As significant applications we derive new intrinsic geometric
properties of interior transmission eigenfunctions near high-curvature
points. This is of independent interest in spectral theory. We further
establish unique determination results for the single-wave Schiffer's
problem in certain scenarios of practical interest, such as
collections of well-separated small scatterers. These are the first
results for Schiffer's problem with generic smooth scatterers.

\medskip 
\noindent{\bf Keywords} radiationless sources, invisible, transmission eigenfunctions, inverse shape problems, geometrical properties, single far-field pattern
 
\medskip
\noindent{\bf Mathematics Subject Classification (2010)}: 35Q60, 78A46 (primary); 35P25, 78A05, 81U40 (secondary).

\end{abstract}

\maketitle

\section{Introduction}\label{sec:Intro}

Visibility and invisibility are two themes in wave scattering which
lie at the heart of scientific inquiry and technological
development. We consider two types of scenarios. The first one is
concerned with radiationless or non-radiating monochromatic sources,
and the other one is concerned with non-radiating waves that impinge
against a certain given scatterer consisting of an inhomogeneous index
of refraction. In this article, we establish that such invisible
objects have certain geometrical properties. This allows us to
classify radiating sources and incident waves that are always
radiating for scatterers. Moreover, they also help us to establish
unique determination results for a longstanding inverse scattering
problem in certain scenarios of practical importance.

The study of non-radiating sources has a long and colourful history,
and there exists a vast amount of literature devoted to this topic. We
refer to \cite{Gbu} for an excellent account of the historical
development. The theory of non-radiating sources originates from the
study of the extended rigid electron, initiated by Sommerfeld
\cite{Som1,Som2} and others \cite{Per}. Later, Ehrenfest \cite{Ehr},
Schott \cite{Sch1,Sch2}, Bohm and Weinstein \cite{Boh} and Goedecke
\cite{Goe} theoretically predicted the existence of non-radiating
sources. It was also postulated by those authors that non-radiating
charge distributions might be used as models for elementary particles
and Goedecke even suggested that such distributions might lead to a
``theory of nature''. In more modern times, the mathematical
properties of non-radiating sources have been more explicitly and
systematically investigated
\cite{Frie,Deva,Blei,Hoen,Deva2,Kim,Gam,Mare}. In this work, we
discover more properties of radiationless sources. We first establish
an explicit relationship between the intensity of such a source and
the diameter of its support (in terms of the wavelength). The
relationship immediately suggests that if the support of a generic
source is sufficiently small in terms of the wavelength, then it must
not be radiationless, that is, its radiating pattern cannot be
identically zero. This result generalises the classical result on
radiationless sources which states that for a source supported in a
ball with a constant intensity, if the radius of the ball is
sufficiently small, then the source must be radiating.

Next, we localize and geometrize the result stated above for small
scatterers. We consider a source supported in a bounded domain of
arbitrary size. It is supposed that on the boundary of the support of
the source there is a point with high curvature in a specific sense.
We establish a quantitative relationship between the intensity of a
non-radiating source at the high-curvature point and the corresponding
curvature at that point. This result readily implies that if the
intensity of the source is not vanishing at a boundary point of its
support and the curvature of that boundary point is sufficiently high,
then the source must be radiating, no matter what the rest of the
source is. A similar geometric phenomenon occurs for plasmon
resonance; that is, small objects do produce such a resonance, but
surprisingly it also happens locally near high curvature points of
large objects \cite{plasmon}. In other words high curvature instead of
smallness seems to be the underlying cause for these phenomena. This
also means that a radiationless source must be nearly-vanishing near
high-curvature points on the boundary of its support, and the higher
the curvature the lower its itensity there. Our study extends the
relevant one in a recent article \cite{Bsource} by one of the authors,
which proves the vanishing behaviour near singular corner points of a
radiationless source. It is our intention to point out that the
geometric setup for the high-curvature condition in out paper is
specific but nevertheless brings the study of non-radiation away from
just corner singularities. Our results obtained elucidate the
geometric viewpoint about radiationless sources; it is not about
corners, but about high curvature. See Remark~\ref{rem:curvature} in
what follows for more relevant discussion.

The technical arguments developed for treating the geometric
characterisations of radiationless sources pave the way for further
studying the scattering from an inhomogeneous index of refraction due
to an incident wave field. We are interested in the case when there is
no scattering, that is, invisibility occurs while probing with a given
wave. Here, the question of interest is what conditions should an
incident wave fulfil so that it propagates uninterrupted after
impinging on a scatterer. That is, we would like to characterise all
the non-radiating incident waves for a given scatterer. This
perspective naturally leads to the so-called interior transmission
eigenvalue problem. In fact, for a given inhomogeneous scatterer, a
non-radiating wave must be an interior transmission eigenfunction
associated to the scatterer. Hence, in order to characterize the
non-radiating waves, one usually first characterizes the interior
transmission eigenfunctions associated with the given inhomogeneous
index of refraction.

The study of the interior transmission eigenvalue problem has a long
history in inverse scattering theory. It was first introduced by
Colton and Monk \cite{CM} and Kirsch \cite{Kir}. The problem is a type
of non-elliptic and non-self-adjoint eigenvalue problem, so its study
is mathematically interesting and challenging. The existing results in
the literature mainly focus on the spectral properties of the
transmission eigenvalues, namely their existence, discreteness,
infiniteness and Weyl's laws. Generally, the theorems for transmission
eigenvalues follow in a sense the results in spectral theory for the
Laplacian in a bounded domain; see e.g. \cite{CKP, RS, PS, CGH, LV,
  BP} as well as a recent survey article \cite{CHreview} and the
references cited therein. However, the transmission eigenfunctions
reveal certain distinct and intriguing features. In general, the
eigenfunctions do not form a complete set in $L^2(\Omega)$, but
certain generalized transmission eigenfunctions do
\cite{BP,Robbiano}. Here, $\Omega$ signifies the domain in question,
namely the support of the inhomogeneous index of refraction. In
\cite{BPS,PSV}, it is proved that the transmission eigenfunctions
cannot be analytically extended across the boundary $\partial\Omega$
if it contains a corner with an interior angle less than $\pi$. In
\cite{BL2017b,BLLW,DCL}, geometric structures of transmission
eigenfunctions under certain regularity assumptions were discovered
for the first time: it was shown that eigenfunctions with some
regularity vanish at a corner of the support of an inhomogeneous index
of refraction.

The spectral results described above are of significant interest in
pure mathematics. On the other hand, their implications to
invisibility in wave scattering can be briefly described as
follows. There exists a smallest positive transmission eigenvalue
depending on the size of the scatterer as well as its refractive
index. This implies that if the size of the scatterer is small enough
(compared to the wavelength), then it cannot be invisible. That is, a
small-sized perturbation to the background index of refraction
scatters every incident wave field nontrivially. The vanishing of
transmission eigenfunctions near a corner indicates that corners
scatter every incident wave nontrivially unless the wave vanishes at
the corner. Physically speaking, a corner on the support of a
scatterer makes the scatterer more visible or more detectable. In this
article, we derive more geometric structures of transmission
eigenfunctions that are of mathematical and practical interest. First,
we establish a relationship among the value of the transmission
eigenfunction, the diameter of the domain and the underlying
refractive index, which indicates that if the domain is sufficiently
small, then the transmission eigenfunction is nearly vanishing.  Then
we further localize and geometrize this ``smallness'' result. Briefly,
interior transmission eigenfunctions with some regularity must be
nearly vanishing at a high-curvature point on the boundary. Moreover,
the higher the curvature, the smaller the eigenfunction must be at the
high-curvature point. This behaviour of nearly vanishing implies that
as long as the shape of a scatterer possesses a highly curved part,
then it scatters every incident wave field nontrivially unless the
wave is vanishingly small atf the highly curved part. The practical
implication of our result indicates that even if a scatterer has a
very smooth shape, non-trivial scattering can be caused due to the
curvature of the shape. This is in sharp contrast to the existing
studies which establish the cause of scattering from the singularities
of the shape, a mathematical fact which is to be expected from a
physical point of view.

In addition, there is a complementary perspective on invisibility in
wave scattering. It concerns the design of material structures such
that for a given set of incident waves, no scattering would
occur. This is also referred to as cloaking technology and it has
received considerable attentions in the literature in recent
years. One can make material structures that are invisible with
respect to probing by any incident wave
\cite{GLU,Leo,PenSchSmi}. However, those structures employ singular
refractive indices that are unrealistic for fabrication. It is a
fundamental question in cloaking theory whether one can employ
non-singular materials to achieve perfect invisibility. Our result, on
the nearly vanishing of the transmission eigenfunctions, implies that
in general, the use of singular materials for a perfect cloaking
device is inevitable. Indeed, consider incident plane waves of the
form $\exp(\mathrm{i}x\cdot\xi)$, with $\xi\in\mathbb{R}^n$, which are
usually used for probing and they are non-vanishing everywhere in
space. According to our discussion above, the high curvature of the
shape of a regular inhomogeneous index of refraction scatters the
plane waves nontrivially in general. This point has also been explored
in our work \cite{BL2016} where it was proved that a corner of an
inhomogeneous index of refraction scatters an incident wave not only
nontrivially but also stably, as long as the incident wave is not
vanishing at the corner point. The current study pushes this viewpoint
to the practically interesting case of smooth shapes. This also
motivates the following. To achieve invisibility for a given material
structure, one should position the structure such that its corners or
high-curvature points are located where the amplitude of the incident
waves vanish. Finally, we would also like to mention in passing that
there is some related research on the so-called approximate
invisibility cloaking which employs regular media and tries to
diminish the scattering effect; see e.g. \cite{Ammari2, Ammari3,
  DLU15, KOVW, LiLiuRonUhl, Liub} as well as the survey articles
\cite{GKLU4,GKLU5,LiuUhl} and the references cited therein.

The visibility issue in the theory of wave scattering has a very
strong practical background and is usually referred to as the inverse
scattering problem. It is concerned with the extraction of knowledge
of the underlying object, which is unknown or inaccessible, from the
associated radiating wave patterns measured far from the object. If
the underlying object is an active source which generates the
radiating pattern, then one has the so-called inverse source
problem. In the case that the underlying object is an inhomogeneous
index of refraction, one sends an incident wave for probing and the
inhomogeneity interrupts the wave propagation and generates a
scattered wave pattern whose far-field is measured. This is called the
inverse medium problem. Inverse source problems arise in a variety of
important applications including detection of hazardous chemicals,
medical imaging, photoacoustic and thermoacoustic tomography, brain
imaging, artificial intelligence in gesture computing and others. The
inverse medium scattering problems are central to many industrial and
engineering developments including radar and sonar, geophysical
exploration, medical imaging and non-destructive testing. There is a
rich theory on inverse scattering problems and it is impossible for us
to provide a comprehensive review on this topic. We refer to the
research monographs \cite{CK,Isa2,Uhl} for discussions on these and
other related developments.

In this context, we are concerned with the inverse problem of
recovering the shape or support of an object, independent of its
content, by the measurement of a single far-field pattern. The problem
of determining the scattering potential is underdetermined for a
single measurement, but solved for infinitely many
incident-wave--far-field pairs \cite{SU}. Shape determination by one
measurement is formally determined but unsolved. This inverse problem
is also referred to as Schiffer's problem in the literature \cite{CK}.
This problem was originally posed for impenetrable obstacles, i.e.,
where the waves cannot penetrate inside the object and only exist in
the exterior of the object. M.~Schiffer was the first to show that a
sound-soft obstacle can be uniquely determined by infinitely many
far-field patterns. Schiffer's proof was based on a spectral argument
for the Dirichlet Laplacian and appeared as a private communication in
the monograph by Lax and Phillips \cite{LP}. The requirement of
infinitely many far-field patterns was relaxed to a finite number by
Colton and Sleeman \cite{CS} depending on the a priori knowledge of
the size of the obstacle. The uniqueness for the sound-hard obstacle
case with infinitely many far-field patterns was established by Kirsch
and Kress \cite{KirKre}. Using infinitely many far-field patterns,
Isakov established that the shape of a penetrable inhomogeneous medium
can be uniquely determined \cite{Isa}.  However, it is widely
conjectured that the uniqueness for Schiffer's problem follows when
using a single far-field pattern \cite{CK,Isa2}. The breakthrough on
this problem when having a single far-field pattern was made for
polyhedral obstacles \cite{AR,CY,EY2,LPRX,LRX,Liu-Zou,Ron2}. In
\cite{HNS}, it was proved that a non-analytic Lipschitz obstacle can
be uniquely recovered by at most a few far-field patterns. Recently,
there is growing interest in establishing the uniqueness for
Schiffer's problem in determining the shape of an active source or a
penetrable index of refraction using a single far-field pattern, but
mainly restricted to the polyhedral support
\cite{Bsource,BL2016,BL2017,HSV,Ikehata}. In \cite{KS1,KS2} it is
shown that the convex scattering support of a far-field is uniquely
determined. The former is a subset of the convex hull of the support
of any source or scattering inhomogeneity that could produce that
far-field. In this paper, using the obtained results on the geometric
structures of non-radiating sources and non-radiating waves, we
establish uniqueness and approximate uniqueness results for Schiffer's
problem in determining the support of an active source or an
inhomogeneous medium with a single far-field pattern in our geometric
scenarios.

The rest of the paper is organized as follows. In Section 2, we
consider the radiationless source and its geometric
characterizations. In Section 3, we consider the non-radiating waves
and transmission eigenfunctions and their geometric
characterizations. Section 4 is devoted to the study of Schiffer's
problem.

\section{Radiation and small sources}\label{sect:2}

\subsection{Wave scattering from an active source}\label{nonRadiatingSetup}

Let $f:\R^n\to\C$ be a function having compact support, $f =
\chi_\Omega \varphi$, where $\Omega\subset\R^n$ is a bounded domain
and $\varphi\in L^\infty(\R^n)$, $\varphi\neq0$ in a neighbourhood of
$\partial\Omega$. The set $\Omega$ is the external \emph{shape} of $f$
while $\varphi$ describes the \emph{intensity} of the source at
various points in $\Omega$. We assume that $\varphi$ and $\Omega$ do
not depend on the wavenumber $k$ that's fixed. In other words we are
considering monochromatic scattering. We recall below scattering by a
source as in \cite{KS2}. This is in essence the asymptotic theory of
fundamental solutions to $\Delta+k^2$. Existence, uniqueness and
smoothness of the solution can be found in \cite{ES}, sections 7.1 and
7.2, and after Theorem 2.103 for the $n$-dimensional formulas. The
source $f$ produces a scattered wave $u\in H^2_{loc}(\R^n)$ given by
the unique solution to
\begin{equation} \label{sourceScattering}
  (\Delta+k^2) u = f, \qquad \lim_{r\to\infty} r^\frac{n-1}{2}
  \big(\partial_r - ik \big) u = 0
\end{equation}
where $r=\abs{x}$ for $x\in\mathbb{R}^n$. The limit in \eqref{sourceScattering} is known as the Sommerfeld radiation condition which characterizes the outgoing nature of the radiating wave. By the limiting absorption principle (cf. \cite{ES} Section 7.2), the solution to \eqref{sourceScattering} can be computed as follows,
\begin{equation}\label{eq:source1}
\begin{split}
u=(\Delta+k^2)^{-1} f & =\lim_{\varepsilon\rightarrow +0} \big(\Delta+(k+i\varepsilon)^2 \big)^{-1}f\\
&=-\lim_{\varepsilon\rightarrow+0}\int_{\mathbb{R}^n} \frac{e^{i x\cdot\xi}\widehat f(\xi)}{|\xi|^2-(k+i\varepsilon)^2} \ d\xi,
\end{split}
\end{equation}
where $\widehat f(\xi):=\mathcal{F}f(\xi)=(2\pi)^{-n}\int_{\mathbb{R}^n} f(x) e^{-i\xi\cdot x}\ dx$ signifies the Fourier transform of $f$. Inverting the Fourier transform in \eqref{eq:source1}, one has the following integral representation, 
\begin{equation}\label{eq:source2}
u=(\Delta+k^2)^{-1} f:=-\frac i 4 \left(\frac{k}{2\pi}\right)^{\frac{n-2}{2}}\int_{\mathbb{R}^n} |x-y|^{\frac{2-n}{2}} H_{\frac{n-2}{2}}^{(1)}(k|x-y|) f(y)\ dy,
\end{equation}
where $H_{(n-2)/2}^{(1)}$ is the first-kind Hankel function of order
$(n-2)/2$. See \cite{ES}, after Theorem 2.103, and note that they have
mistakenly written $4\pi$ instead of $2\pi$ in the denominator. The
large argument asymptotics of Hankel functions, \cite{NIST}
Eq.10.17.5, and the three-dimensional argument, \cite{Eskin} Lemma
19.1, applied to \eqref{eq:source2} yields that
\begin{equation}\label{eq:source3}
u(x)=\frac{e^{ik|x|}}{|x|^{(n-1)/2}} C_{n,k}\int_{\mathbb{R}^n} e^{-ik\hat{x}\cdot y} f(y)\ dy+ \mathcal O(\abs{x}^{\frac n 2}),\quad |x|\rightarrow \infty,
\end{equation}
where $\hat x:=x/|x|\in\mathbb{S}^{n-1}$, $x\in\mathbb{R}^n\backslash\{0\}$, and 
\[
C_{n,k} = \frac{-i}{\sqrt{8\pi}}\left(\frac{k}{2\pi}
\right)^{\frac{n-2}{2}} e^{-\frac{(n-1)\pi}{4}i}.
\]
The \emph{far-field pattern } of $u$ is given by
\begin{equation}\label{eq:sss1}
  u_\infty(\hat x):= C_{n,k}\int_{\mathbb{R}^n} e^{-ik\hat{x}\cdot y}
  f(y)\ dy=(2\pi)^n C_{n,k} \mathcal{F}f(k\hat x)\in L^2(\mathbb{S}^{n-1}).
\end{equation}

As discussed earlier, we are particularly interested in the case where the source $f$ does not radiate, and one has $u_\infty\equiv 0$. By the Rellich lemma (cf. \cite{CK}), which establishes the one-to-one correspondence between the wave field and its far-field pattern, one has $u=0$ in the unbounded component of $\mathbb{R}^n\backslash\overline{\Omega}$. Hence, in such a case, the source is also referred to as non-radiating or radiationless. According to \eqref{eq:sss1}, one clearly has $\widehat f(k\hat x)\equiv 0$ for $\hat x\in\mathbb{S}^{n-1}$ for a radiationless source. Hence, characterizing radiationless sources, one actually characterizes functions with compact supports whose Fourier transforms vanish on the sphere of radius $k$.

A classical example is given by a source of constant intensity supported on a ball, namely the source is of the form $f=c_0\chi_{B_{r_0}}$, where $c_0\in\mathbb{C}$, $c_0\neq 0$ and $B_{r_0}:=B(0, r_0)$ is a central ball of radius $r_0\in\mathbb{R}_+$. By the properties of the Bessel functions (cf. \cite[B.3]{Grafakos}), one has $\mathcal{F}\chi_{B_{r_0}}(k\hat x)=\gamma J_{n/2}(k r_0)$, where $\gamma>0$ is a constant depending on the dimension $n$, wavenumber $k$ and radius $r_0$ of the ball. Here $J_{n/2}$ is a \emph{Bessel function} of order $n/2$. Hence if the radius of the support of the source, measured in units of wavenumber, is a zero of the Bessel function, then the source is radiationless. In particular, by using the fact that there is a smallest positive zero of the Bessel function, a sufficiently small $kr_0$ implies that the source must be radiating. In what follows, we shall first generalize this classical example to a more general scenario. Indeed, we establish a quantitative relationship between the intensity of a generic radiating source and the diameter of its support (in units of wavenumber). This relationship readily implies that if a source's support is sufficiently small (compared to its intensity as well as the underlying wavelength), then it must be radiating.

We further localize and geometrize the result concerning sources of small support. By localizing, we mean that instead of considering a source with a small support, we consider a source whose support is ``locally small''. We know that for a domain with a small diameter, the (extrinsic) curvature of its boundary surface must be large. Hence, we naturally characterize the ``locally small'' domain as the existence of a boundary point where the boundary surface curvature is very large in a particular way. This is referred to as the ``geometrization'' of the ``local smallness''. That is, we consider sources whose support contains a high-curvature point on its boundary. We basically extend the result on the source with a sufficiently small support to this ``locally small'' case. In fact, we establish a certain quantitative relationship between the intensity of the source at the high-curvature point and the corresponding curvature. This relationship readily implies that if the support of a source contains a boundary point with a sufficiently high curvature, then it must be radiating. It also implies that the intensity of a radiationless source must be nearly vanishing near a high-curvature point on the boundary of its support. 

\subsection{Small-support sources must be radiating}

In this section, we establish that a source with a relatively small support compared to its intensity must be radiating in dimensions $n=2$ and $n=3$. Consider the scattering problem \eqref{sourceScattering}; we have

\begin{theorem}\label{thm:source1b}
  There is a universal constant $C\in\R_+$ with the following
  property. Let $n\in\{2,3\}$, $k\in\R_+$ and $\Omega\subset\R^n$ be a
  bounded Lipschitz domain whose complement is connected. Let
  $\varphi\in L^\infty(\R^n)$ with $\varphi_{\mid\Omega_c}\in
  C^\alpha(\overline{\Omega_c})$ for some $0<\alpha\leq1/2$ and some
  component $\Omega_c$ of $\Omega$. Let $u\in H^2_{loc}(\R^n)$ be the
  unique outgoing solution to $(\Delta+k^2)u = \chi_\Omega
  \varphi$. Let $\delta = d(\Omega_c)k$ be the diameter of $\Omega_c$
  in units of $k^{-1}$. If
  \begin{equation} \label{source1bFormula}
    \frac{\sup_{\partial\Omega_c}
      \abs{\varphi}}{\sup_{\Omega_c}\abs{\varphi} +
      k^{-\alpha}\left[\varphi\right]_{\alpha,\Omega_c}} > C \big(
    (1+\delta) \delta^{n/2} + 1 \big) \delta^\alpha,
  \end{equation}
  then $u_\infty$ cannot be identically zero.
\end{theorem}

We use the notation
\begin{equation*}
  \left[\varphi\right]_{\alpha,\Omega} := \sup_{\substack{x\neq
      y\\x,y\in\Omega}} \frac{\abs{\varphi(x) -
      \varphi(y)}}{\abs{x-y}^\alpha}
\end{equation*}
for the H\"older space $C^\alpha(\overline\Omega)$ seminorm. To keep
the number of parameters low in further theorems, especially in
\cref{sect:waves_eigenfunctions} and after, we will often have
$\alpha=1/2$ even though the proof would allow any $0<\alpha\leq1/2$.

We note that the diameter of $\Omega$ is not the important
quantity. Instead what's important is the diameter of \emph{any}
component $\Omega_c$ of $\Omega$. By a component of $\Omega$ we mean a
subset $\Omega_c\subset\Omega$ such that $\overline{\Omega_c} \cap
\overline{\Omega\setminus\Omega_c} = \emptyset$. Since we consider
bounded Lipschitz domains this means that $\overline{\Omega_c}$ has a
neighbourhood $U$ where $U\cap\Omega=\Omega_c$. Also, $\Omega_c$ can
itself be composed of multiple components, but \cref{thm:source1b}
gives the strongest result for components which are the smallest so
don't contain other components. Now, if $\Omega$ has a component where
$\varphi$ and the diameter satisfy \eqref{source1bFormula}, then the
far field cannot vanish identically. This is despite what shape or
intensity the source might have elsewhere.

An immediate consequence of \cref{thm:source1b} is that for sources
with a given strength, namely $\sup_{\R^n}\abs{\varphi} +
k^{-\alpha}\left[\varphi\right]_{\alpha,\R^n}$ fixed, if the size of
(a component of) its support is sufficiently small in units of
$k^{-1}$ and $\sup_{\partial\Omega}|\varphi|$ has a suitable positive
a-priori lower bound, then it must be radiating; or in other words, if
it is radiationless with a sufficiently small support, then its
intensity must be much smaller on the boundary of its support than on
the interior. It is particularly surprising to note that the intensity
of a source does not matter if it is constant. No matter how small
intensity it has, if it has a small size it will prevent the total
far-field from being identically zero.

\begin{corollary}
  There is a universal $\varepsilon>0$ such that if $\varphi$ is
  constant and $d(\Omega)<\varepsilon/k$ then $u_\infty\not\equiv 0$. This
  $\varepsilon$ can be chosen as the smallest positive solution to the
  equations
  \begin{equation}
    C((1+\varepsilon)\varepsilon^{n/2}+1)\varepsilon^{1/2} = 1
  \end{equation}
  with $n=2$ and $n=3$.
\end{corollary}

One can also infer that the size of a radiationless source must have a
positive lower bound that depends on its intensity. More precisely,
referring to formula \eqref{source1bFormula}, we have

\begin{corollary}\label{cor:source1b}
  Let $\varphi\in C^\alpha(\mathbb{R}^n)$ with $0<\alpha\leq1/2$ and
  assume that the source $\chi_\Omega \varphi$ is radiantionless. Then
  the diameter of any component of $\Omega$ must be at least
  \begin{equation}\label{eq:a2}
    \min\left( 1, \left( \frac{1}{3C} \frac{\sup_{\partial\Omega}
      \abs{\varphi}}{\sup_\Omega\abs{\varphi} +
      k^{-\alpha}\left[\varphi\right]_{\alpha,\Omega}}
    \right)^{1/\alpha} \right) \frac{1}{k}.
  \end{equation}
  Here $C$ is as in \cref{thm:source1b}.
\end{corollary}
\begin{proof}
  If $d(\Omega)< k^{-1}$ then $\delta<1$ and the right-hand side of
  \eqref{source1bFormula} is smaller than $3C\delta^\alpha$.
\end{proof}

\bigskip
In order to prove \cref{thm:source1b}, we first derive some auxiliary
results. It is of particular importance for our results to know the
exact dependence on $k$ and the components $\Omega_c$ in the
estimate. For now we only know that the sum of all the far-fields
radiated by each of the components is zero. We will later show that we
can apply this lemma to individual components $\Omega_c$. A zero
far-field implies zero radiation from each individual component
$\Omega_c$. See \cref{farFieldSplit}.

\begin{lemma}\label{zeroScatterNorm}
  Let $\Omega\subset\R^n$, $n\in\{2,3\}$ be a bounded domain. Let
  $\varphi\in L^2(\R^n)$ and $u\in L^2_{loc}(\R^n)$ be the outgoing
  solution to $(\Delta+k^2)u = \chi_\Omega \varphi$. If $u = 0$ in
  $\R^n\setminus\overline\Omega$ then $u\in C^{1/2}(\R^n)$ and
  \[
  \norm{u}_{L^\infty(\R^n)} + k^{-\alpha} [u]_{\alpha,\R^n} \leq C_n
  k^{n/2-1} \big( k^{-1} + d(\Omega) \big)
  \norm{\varphi}_{L^2(\Omega)}
  \]
  for some finite constant $C_n$ depending only on the dimension $n$.
\end{lemma}
\begin{proof}
  Denote
  \begin{align*}
    f(x) &= \chi_\Omega(x) \varphi(x),\\
    U(y) &= u(y/k),\\
    F(y) &= f(y/k)/k^2,\\
    \Omega_k &= \{y\in\R^n \mid y/k\in\Omega\}.
  \end{align*}
  Then $(\Delta+1)U = F$ in $\R^n$ with $U\in L^2_{loc}$ and $F\in
  L^2(\R^n)$ being equal to zero outside of $\Omega_k$.

  Firstly note that $(-\abs{\xi}^2+1)\hat U(\xi) = \hat F(\xi)$ and so
  $\abs{\xi}^2\hat U(\xi) = \hat U(\xi) - \hat F(\xi)$ for
  $\xi\in\R^n$. Note also that $U=0$ in
  $\R^n\setminus\overline{\Omega_k}$, and so actually $U\in L^2(\R^n)$
  instead of being there locally only. We can thus deduce that $U\in
  H^2(\R^n)$ with the following estimate
  \[
  \norm{U}_{H^2(\R^n)} = \norm{(1+\abs{\cdot}^2)\hat{U}}_{L^2(\R^n)} =
  \norm{2\hat U - \hat F}_{L^2(\R^n)} \leq \norm{F}_{L^2(\R^n)} +
  2\norm{U}_{L^2(\R^n)}
  \]
  by the Plancherel theorem.

  By Sobolev embedding (e.g. Theorem~4.12 part~II in \cite{AF}) there
  is a constant $C_n\in\R_+$ such that $H^2(\R^n) \hookrightarrow
  C^{\alpha}(\R^n)$ with the estimate $\norm{U}_{C^{\alpha}(\R^n)}
  \leq C_n \norm{U}_{H^2(\R^n)}$ uniformly over $0\leq\alpha\leq1/2$.
  If we combine this with the previous paragraph's estimate and recall
  that $F=U=0$ in $\R^n\setminus\overline{\Omega_k}$ we have
  \[
  \norm{U}_{C^{\alpha}(\R^n)} \leq C_n \big( \norm{F}_{L^2(\Omega_k)}
  + 2\norm{U}_{L^2(\Omega_k)} \big).
  \]
  Let us return to the non-scaled variable $x$ next. Simple
  calculations show that the above estimate is equivalent to
  \[
  \norm{u}_{L^\infty(\R^n)} + k^{-\alpha} [u]_{\alpha,\R^n} \leq C_n
  k^{n/2} \big( k^{-2} \norm{\varphi}_{L^2(\Omega)} + 2
  \norm{u}_{L^2(\Omega)} \big).
  \]

  We need an a-priori estimate for $\norm{u}_{L^2(\Omega)}$. The right
  type of estimate for our situation has been shown in \cite{BS},
  Section~2. For a solution $u$ to $(\Delta+k^2)u=f$ that satisfies a
  radiation condition and where the source $f$ is zero outside a
  bounded domain $\Omega_s$, they show that
  \[
  \norm{u}_{L^2(\Omega_r)} \leq C_n k^{-1} \sqrt{d(\Omega_r)
    d(\Omega_s)} \norm{f}_{L^2(\Omega_s)}
  \]
  for some finite constant $C_n$ depending only on the dimension $n$
  and the scaled differential operator, which is $\Delta+1$ in this
  case. This holds for any bounded open sets $\Omega_s,\Omega_r
  \subset \R^n$ as long as $f$ vanishes on the exterior of
  $\Omega_s$. In our case we have $\Omega_r=\Omega_s=\Omega$ and so
  the estimate $\norm{u}_{L^2(\Omega)} \leq C_n k^{-1} d(\Omega)
  \norm{\varphi}_{L^2(\Omega)}$. The proof follows by combining this
  with the previous paragraph's final estimate.
\end{proof}

The next lemma shows that a sum of waves radiated from different
components of a radiating domain has zero far-field if and only if the
individual summands do so too. This will be used to allow us to use
\cref{zeroScatterNorm} to prove \cref{thm:source1b}.
\begin{lemma} \label{farFieldSplit}
  Let $\Omega\subset\R^n$ be a bounded Lipschitz domain whose
  complement is connected. Let $\varphi\in L^\infty(\R^n)$ and let
  $u\in H^2_{loc}(\R^n)$ satisfy $(\Delta+k^2)u=\chi_\Omega\varphi$
  with Sommerfeld's radiation condition at infinity. Let $\Omega_c$ be
  a component of $\Omega$ and let $u_c \in H^2_{loc}(\R^n)$ satisfy
  $(\Delta+k^2)u_c = \chi_{\Omega_c} \varphi$ and Sommerfeld's
  radiation condition at infinity.

  Then the total far-field vanishes, $u_\infty = 0$, if and only if
  the individual far-fields vanish, namely $(u_c)_{\infty} =
  (u-u_c)_\infty = 0$.
\end{lemma}
\begin{proof}
  The trivial direction follows by the linearity of the source to
  far-field map since $(\Delta+k^2)(u-u_c) = \chi_\Omega\varphi -
  \chi_{\Omega_c}\varphi$. Let us prove the non-trivial direction.

  Assume that the far-field of $u$ vanishes. By Rellich's lemma and
  the connectedness of $\R^n\setminus\Omega$ we see that $u=0$ in
  $\R^n\setminus\overline{\Omega}$ so in particular $u_{|\Omega_c} \in
  H^2_0(\Omega_c)$. Let
  \[
  \tilde u = \begin{cases}
    u, &\Omega_c,\\
    0, &\R^n\setminus\Omega_c.
  \end{cases}
  \]

  Because $u_{|\Omega_c} \in H^2_0(\Omega_c)$ and $(\Delta+k^2)u =
  \chi_{\Omega_c} \varphi$ in a neighbourhood of $\overline{\Omega_c}$
  (one that does not intersect $\Omega\setminus\Omega_c$), we see that
  \[
  (\Delta+k^2)\tilde u = \chi_{\Omega_c} \varphi
  \]
  in $\R^n$. Furthermore we note that $\tilde u$ satisfies the
  Sommerfeld radiation condition at infinity trivially. We see that
  $\tilde u = u_c$, as both solve the same source scattering problem
  whose solution is known to be unique. This implies that $u_c = 0$ in
  $\R^n\setminus\overline{\Omega_c}$ and so $(u_c)_\infty=0$, which
  readily yields the claim.
\end{proof}

\bigskip
We are in a position to present the proof of \cref{thm:source1b}.

\begin{proof}[Proof of \cref{thm:source1b}]
  Let us start by considering the case of $\Omega$ having a single
  component, $\Omega_c=\Omega$. Let us prove the claim by
  contradiction. Assume that $u_\infty = 0$. By Rellich's theorem, the
  connectedness of $\R^n\setminus\overline\Omega$ and the unique
  continuation for $\Delta+k^2$ we see that $u=0$ in
  $\R^n\setminus\overline\Omega$.  Because $u\in H^2_{loc}$ we see
  that $u_{|\Omega} \in H^2_0(\Omega)$.

  Let $g = \varphi_{|\Omega} - k^2 u_{|\Omega}$. By
  \cref{zeroScatterNorm} this is a continuous function. Let
  $p\in\partial\Omega$. Then, because $u_{|\partial\Omega} = 0$, we
  have
  \[
  \varphi(p)m(\Omega) = g(p)m(\Omega) = \int_\Omega g(p) dx,
  \]
  where and also in what follows, $m(\Omega)$ denotes the measure of $\Omega$. 
  On the other hand
  \[
  \int_\Omega g(x) dx = \int_\Omega (\varphi - k^2 u)(x) dx =
  \int_\Omega 1\cdot \Delta u(x) dx = \int_{\partial\Omega}
  \partial_\nu u(x) d\sigma(x) = 0
  \]
  because $\partial_\nu u = 0$ follows from $u_{|\Omega} \in
  H^2_0(\Omega)$. Hence $\varphi(p)m(\Omega) = \int_\Omega (g(p)-g(x))
  dx$. Then
  \begin{equation}\label{eqEstAtp}
    \abs{\varphi(p)m(\Omega)} \leq [g]_{\alpha,\Omega} \int_\Omega
    \abs{p-x}^{\alpha} dx \leq [g]_{\alpha,\Omega} m(\Omega)
    \big(d(\Omega)\big)^{\alpha}.
  \end{equation}

  Let us estimate $[g]_{\alpha,\Omega}$ next. Recall that
  $g=\varphi_{|\Omega} - k^2 u_{|\Omega}$. For
  $k^2[u]_{\alpha,\Omega}$, we are going to use \cref{zeroScatterNorm}
  again. By it and $\norm{\varphi}_2 \leq \sqrt{m(\Omega)}
  \norm{\varphi}_\infty$, we have
  \[
  k^2 [u]_{\alpha,\Omega} = k^{2+\alpha} k^{-\alpha}
  [u]_{\alpha,\Omega} \leq C_n k^{n/2+1+\alpha} \sqrt{m(\Omega)}
  \big( k^{-1} + d(\Omega) \big) \norm{\varphi}_{L^\infty(\Omega)}.
  \]
  With \eqref{eqEstAtp} we get
  \[
  \abs{\varphi(p)} \leq \big(d(\Omega)\big)^{\alpha} \big(
      [\varphi]_{\alpha,\Omega} + C_n k^{n/2+1+\alpha}
      \sqrt{m(\Omega)} \big( k^{-1} + d(\Omega) \big)
      \norm{\varphi}_{L^\infty(\Omega)} \big).
  \]
  Recall our definition of $\delta$ which gives $d(\Omega) = \delta
  k^{-1}$ and also $m(\Omega)\leq C_n (d(\Omega))^n = C_n \delta^n
  k^{-n}$ because $\Omega$ can be encapsulated into a sphere of radius
  $d(\Omega)$. This gives
  \begin{align*}
    \abs{\varphi(p)} &\leq C_n \delta^{\alpha} k^{-\alpha} \big( [
      \varphi]_{\alpha,\Omega} + k^{1+\alpha} \delta^{n/2} \big(
    k^{-1} + k^{-1}\delta \big) \norm{\varphi}_{L^\infty(\Omega)}
    \big) \\ & \leq C_n \delta^{\alpha} \big(
    k^{-\alpha}[\varphi]_{\alpha,\Omega} + \delta^{n/2} \big( 1 +
    \delta \big) \norm{\varphi}_{L^\infty(\Omega)} \big) \\ & \leq C_n
    \delta^{\alpha} \big( 1 + \delta^{n/2} \big( 1 + \delta \big)
    \big) \big( k^{-\alpha}[\varphi]_{1/2,\Omega} +
    \norm{\varphi}_{L^\infty(\Omega)} \big)
  \end{align*}
  from which the claim follows by taking a supremum over
  $p\in\partial\Omega$ and $n\in\{2,3\}$.

  \smallskip
  Next, let us prove the general case where $\Omega$ has multiple
  components, and we want an estimate relying on the geometry of a
  single component $\Omega_c$. Let us assume the contrary, that
  $u_\infty=0$. Let $u_c\in H^2_{loc}$ satisfy $(\Delta+k^2)u_c =
  \chi_{\Omega_c} \varphi$ and the Sommerfeld radiation condition at
  infinity. By \cref{farFieldSplit} we see that ${u_c}_\infty = 0$
  because $u_\infty=0$. We can now apply the single component estimate
  of \cref{thm:source1b} proven above to $u_c$ and $\Omega_c$, and we
  see that
  \[
  \frac{\sup_{\partial\Omega_c} \abs{\varphi}}{\sup_{\Omega_c}
    \abs{\varphi} + k^{-\alpha}[\varphi]_{\alpha,\Omega_c}} \leq C
  \delta^\alpha \big((1+\delta)^{n/2} + 1 \big).
  \]
  This contradicts \eqref{source1bFormula}, so the claim is proved.
\end{proof}

\section{Radiation from large sources with high curvature}
Next, we localize and geometrize the result in \cref{thm:source1b}. To that end, we first introduce the admissible $K$-curvature point in the next subsection for our study. 
\subsection{Admissible $K$-curvature boundary points}

In this section, we introduce the admissible $K$-curvature boundary points that shall be used throughout the rest of the paper. Let $\Omega$ be a bounded domain in $\mathbb{R}^n$ and $p\in\partial\Omega$ be a fixed point. We next detail the conditions for $p$ to be an admissible $K$-curvature point. 

\begin{definition} \label{geomDef}
  Let $K, L, M, \mu$ be positive constants and $\Omega$ be a
  bounded domain in $\mathbb{R}^n$, $n\geq 2$. A point
  $p\in\partial\Omega$ is said to be an admissible $K$-curvature point
  with parameters $L, M, \mu$ if the following conditions are
  fulfilled; see \cref{fig1} for a schematic illustration.
  \begin{enumerate}
  \item Up to a rigid motion, the point $p$ is the origin $x=0$ and
    $e_n = (0,\ldots,0,1)$ is the interior unit normal vector to
    $\partial \Omega$ at $0$.

  \item \label{item2} Set $b = \sqrt{M}/K$ and $h=1/K$. There is a
    $C^3$-function $\omega: B(0,b) \to \R_+\cup\{0\}$ with
    $B(0,b)\subset\R^{n-1}$ being the disc of dimension $n-1$ with
    radius $b$ and center $0$, such that if
    \begin{equation}\label{OmegaBHdef}
      \Omega_{b,h} = B(0,b)\times{({-h,h})} \cap \Omega,
    \end{equation}
    then
    \begin{equation}
      \Omega_{b,h} = \{ x\in\R^n \mid \abs{x'} < b,\, {-h}<x_n<h,\,
      \omega(x') < x_n < h \},
    \end{equation}
    where $x=(x',x_n)$ with $x'\in\R^{n-1}$.

  \item The function $\omega$ in \cref{item2} satisfies
    \begin{equation}
      \omega(x') = K\abs{x'}^2 + \mathcal O(\abs{x'}^3), \quad x'\in
      B(0, b).
    \end{equation}

  \item \label{KpKm} We have $M\geq1$ and there are $0<K_- \leq K \leq
    K_+<\infty$ such that
    \begin{align*}
      &K_- \abs{x'}^2 \leq \omega(x') \leq K_+ \abs{x'}^2, \qquad
      \abs{x'}<b,\\ &M^{-1} \leq \frac{K_\pm}{K} \leq M, \qquad
      K_+-K_- \leq L K^{1-\mu}.
    \end{align*}

  \item The intersection $V = \overline{\Omega_{b,h}} \cap
    (\R^{n-1}\times\{h\})$ is a Lipschitz domain.
  \end{enumerate}
\end{definition}

\begin{figure}
  \includegraphics{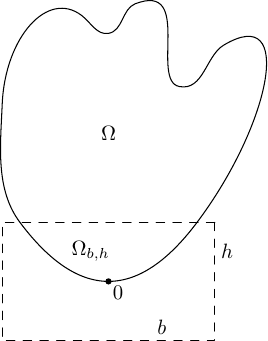}
  \caption{The boundary neighbourhood $\Omega_{b,h}$ of a
    high-curvature point.}
  \label{fig1}
\end{figure}

A simple example for an admissible $K$-curvature boundary point is that locally
near $p$, $\partial\Omega$ is the part of a paraboloid, namely,
$\omega(x')=K |x'|^2$. In such a case, one can easily determine the
values of parameters $L, M, \mu$ to fulfil the requirements in
\cref{geomDef}. However, we allow the presence of more
general geometries near $p$, and this can be guaranteed by the
following example. A graph $x_n = \omega(x')$ satisfying its requirements has a $K$-curvature point. This allows examples where $\omega$ has non-zero third order terms in its Taylor expansion.

\begin{example} \label{paraboloidBounds}
  Assume that $\omega(x') = K\abs{x'}^2 + \mathcal O(\abs{x'}^3)$ is a
  $C^3$-function. Let $L,\mu>0$, $M\geq1$ and
  \begin{equation}\label{cnDef}
    c_n = \sup_{x'\in\R^{n-1}} \frac{1}{\abs{x'}^3}
    \sum_{\abs{\beta}=3} \frac{{x'}^\beta}{\beta!}.
  \end{equation}
  Let
  \begin{equation} \label{fExampleDef}
    f(\omega,b) = \max_{\abs{\alpha},\abs{\beta}=3}\sup_{\abs{x'}<b}
    \abs{\partial^\alpha\omega(x')} / \beta!
  \end{equation}
  Set $b=\sqrt{M}/K$ and assume that
  \begin{equation} \label{3rdOrderBounds}
    f(\omega,b) \leq \min\left(\frac{M-1}{c_n M^{3/2}} K^2, \frac{L}{2c_n
      \sqrt{M}} K^{2-\mu} \right).
  \end{equation}
  Set
  \begin{equation}\label{eq:dd1}
    K_- = K - c_n f(\omega,b) b\ \ \mbox{and}\ \ K_+ = K + c_n
    f(\omega,b) b.
  \end{equation}
  Then one has
   \[
   M^{-1} \leq \frac{K_-}{K} \leq K \leq \frac{K_+}{K} \leq M, \qquad
   K_+-K_- \leq L K^{1-\mu}
  \]
  and 
  \[
  K_-\abs{x'}^2 \leq \omega(x') \leq K_+ \abs{x'}^2\quad \mbox{when
    $\abs{x'}<b$}.
  \]

  In other words, if the order 3 bounds of \eqref{3rdOrderBounds} are
  satisfied, then the graph defined by $\omega$ has a $K$-curvature
  point.
\end{example}
\begin{proof}
  By Taylor's theorem, we first have
  \[
  \omega(x') = K\abs{x'}^2 + \sum_{\abs{\beta}=3} R_\beta(x')
        {x'}^\beta
  \]
  where the functions $R_\beta$, $|\beta|=3$ satisfy
  $$ \abs{R_\beta(x')} \leq \max_{\abs{\alpha}=3} \sup_{\abs{x'}<b}
  \abs{\partial^\alpha\omega(x')} / \beta! = f(\omega,b).
  $$ Then one has
  \begin{equation}
    \abs{\omega(x') - K\abs{x'}^2} \leq c_n f(\omega,b) \abs{x'}^3,
  \end{equation}
  where $c_n$ is positive and finite because $\abs{x_jx_kx_l} \leq
  \abs{x'} \abs{x'} \abs{x'} = \abs{x'}^3$. Let $K_-,K_+$ be defined
  in \eqref{eq:dd1}. Then if $\abs{x'}<b$, one can directly verify
  that
  \[
  K_-\abs{x'}^2 \leq K\abs{x'}^2 - c_n f(\omega,b) \abs{x'}^3\quad
  \mbox{and}\quad K\abs{x'}^2 + c_n f(\omega,b) \abs{x'}^3 \leq
  K_+\abs{x'}^2.
  \]
  Hence for $\abs{x'}<b$, there holds
  \[
  K_- \abs{x'}^2 \leq K\abs{x'}^2 - c_n f(\omega,b) \abs{x'}^3 \leq
  \omega(x').
  \]
  The upper bound $\omega(x') \leq K_+\abs{x'}^2$ follows from a
  similar argument.

  Next, we prove the bounds for $K_+-K_-$ and $K_\pm/K$. Recall the
  assumed upper bound on $f(\omega,b)$ and that $b=\sqrt{M}/K$. One has by
  straightforward calculations that
  \begin{align*}
    \frac{K_-}{K} &= 1 - \frac{c_n \sqrt{M} f(\omega,b)}{K^2} \geq 1 -
    (1-1/M) = 1/M, \\ \frac{K_+}{K} &= 1 + \frac{c_n \sqrt{B}
      f(\omega,b)}{K^2} \leq 1 + (M-1) = M,
  \end{align*}
  and
  \[
  K_+-K_- = 2c_n \sqrt{M} f(\omega,b) / K \leq L K^{1-\mu}.
  \]
  The proof is complete.
\end{proof}

\begin{remark}\label{rem:curvature}
It can be directly verified that the usual curvature at the point $p$
of the boundary surface is high if $K$ is large in \cref{geomDef}. In
fact, in the practically important three dimensional case, it can be
easily seen that for an admissible $K$-curvature point with a
sufficiently large $K$, both the mean curvature and Gaussian curvature
of the boundary surface at the point are high. However high mean or
Gaussian curvature does not necessarily guarantee that the point would
be a valid $K$-curvature point. This is because \cref{geomDef} ties up
bounds on the second and third order terms in the Taylor expansion of
$\omega$ with the size of the neighbourhood $\Omega_{b,h}$. We see
that $b\geq1/K$, $h=1/K$ and so $\Omega_{b,h} \supset (-1/K,1/K)^2
\cap \Omega$ in two dimensions. So for our proof not only the
curvature at a single point matters, but also what happens to the
third order terms in a neighbourhood of a given size.

In the following, we shall show that for a generic source if its
support contains an admissible $K$-curvature point with a sufficiently
large $K$ (compared to the intensity of the source), then it must be
radiating. This clearly elucidates the geometric viewpoint that the
smallness is not the essential cause, instead the high-curvature is
the essential cause for the radiating nature of a generic source. We
believe that the result can be extended to a more general geometric
setup by requiring that only the mean curvature is high. In fact, in a
recent article by one of the authors \cite{ACL} and from a
reconstruction point of view, it is shown that the local shape of a
scatterer around a boundary point with a high magnitude of mean
curvature can be reconstructed more easily and stably due to the more
scattering from the high-curvature point. We shall consider this
interesting extension in our future study. The same remark applies to
our subsequent results for the medium scattering.
\end{remark}

In the subsequent study, the concept of Rellich's lemma and unique
continuation principle shall play an important role. They are the
fundamental tools of bringing information from the far-field to the
near-field, and a fortiori to the boundary of the scatterer. In
essence if the far-field pattern is known, then the scattered wave is
known in the component of the complement of the scatterer that is
unbounded. Since the scatterer may be hollow and we are studying
boundary behaviour, we define for which boundary points the
information from the far-field pattern can be propagated from
infinity.
\begin{definition} \label{connected2inf}
  Let $U\subset\R^n$ be an open set and $p\in\R^n$. We say that
  \emph{$p$ is connected to infinity through $U$} if there is a
  continuous path $\gamma: \R_+ \to U$ such that $\lim_{s\to0}
  \gamma(s) = p$, and $\lim_{s\to\infty} \abs{\gamma(s)} = \infty$.
\end{definition}

\begin{lemma} \label{Rellich}
  Let $\Omega\subset\R^n$ be a bounded domain and $u^s,{u'}^s\in
  H^2_{loc}(\R^n) \cap C^0(\R^n)$ satisfy the Sommerfeld radiation
  condition and $(\Delta+k^2)u^s=(\Delta+k^2){u'}^s=0$ in
  $\R^n\setminus\overline\Omega$. If $u^s_\infty = {u'}^s_\infty$ and
  $p\in\R^n$ is connected to infinity through
  $\R^n\setminus\overline\Omega$, then $u^s(p) = {u'}^s(p)$.
\end{lemma}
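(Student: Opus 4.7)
The plan is to reduce to the classical Rellich lemma by subtraction and then ``transport'' the vanishing along the path $\gamma$ by continuity. Set $w := u^s - {u'}^s$. By linearity, $w\in H^2_{loc}(\R^n)\cap C^0(\R^n)$, it satisfies $(\Delta+k^2)w=0$ in $\R^n\setminus\overline\Omega$, and it obeys the Sommerfeld radiation condition since both summands do. Moreover the far-field pattern $w_\infty = u^s_\infty - {u'}^s_\infty$ vanishes identically on $\mathbb{S}^{n-1}$. The classical Rellich lemma (see e.g.\ \cite{CK}) therefore yields $w\equiv 0$ on the unique unbounded connected component $\mathcal{U}_\infty$ of $\R^n\setminus\overline\Omega$.

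It remains to show $w(p)=0$, which is not immediate because $p$ itself may lie on $\partial\Omega$. The key topological observation is that the image of the continuous path $\gamma:\R_+\to \R^n\setminus\overline\Omega$ lies in one and only one connected component of $\R^n\setminus\overline\Omega$, by continuity of $\gamma$ and the fact that the (finitely or infinitely many) connected components of this open set are mutually disjoint open subsets. Because $\abs{\gamma(s)}\to\infty$ as $s\to\infty$ while every bounded component of $\R^n\setminus\overline\Omega$ is, by definition, contained in some fixed ball, the tail $\{\gamma(s): s\geq s_0\}$ for large $s_0$ must lie in $\mathcal{U}_\infty$. Consequently the whole image $\gamma(\R_+)$ is contained in $\mathcal{U}_\infty$, and the previous step gives $w(\gamma(s))=0$ for every $s\in\R_+$.

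To conclude, invoke the hypothesis $u^s,{u'}^s\in C^0(\R^n)$, which gives $w\in C^0(\R^n)$. Since $\gamma(s)\to p$ as $s\to 0^+$, continuity of $w$ implies
\[
w(p)=\lim_{s\to 0^+} w(\gamma(s))=0,
\]
i.e.\ $u^s(p)={u'}^s(p)$, as claimed.

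There is no real ``hard step'' here; the proof is essentially a packaging of Rellich's lemma plus elementary topology. The only subtlety worth flagging is that $\gamma$ takes values in $\R^n\setminus\overline\Omega$ but $p$ need not, so one cannot just apply Rellich at $p$ directly; one must reach $p$ as a boundary limit, which is precisely what the global continuity hypothesis $u^s,{u'}^s\in C^0(\R^n)$ is designed to enable. This also clarifies the role of \cref{connected2inf}: it abstracts the minimal geometric condition on $p$ under which far-field information can be propagated all the way to $p$, even when $\partial\Omega$ is irregular or the scatterer has voids.
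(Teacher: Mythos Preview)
Your proof is correct and follows essentially the same approach as the paper: Rellich's lemma forces $w=u^s-{u'}^s$ to vanish outside a large ball, unique continuation/analyticity propagates this along the path, and continuity of $w$ closes the argument at $p$. The only cosmetic difference is that the paper builds an explicit tubular neighbourhood $U=\bigcup_{s>0} B(\gamma(s),d(\gamma(s),\Omega))$ and invokes analyticity there, whereas you argue directly that the connected image $\gamma(\R_+)$ must lie in the unbounded component $\mathcal U_\infty$ on which $w$ already vanishes; both routes are equivalent.
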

\begin{proof}
  Elliptic regularity and Rellich's lemma (e.g. Lemma~2.11 in
  \cite{CK}) imply that $u^s={u'}^s$ outside a large ball. Let
  $\gamma:\R_+\to \R^n\setminus\overline\Omega$ be a path as in
  \cref{connected2inf}. For each $s\in\R_+$ let $r(s) =
  d(\gamma(s),\Omega)$ be the distance from $\gamma(s)$ to $\Omega$,
  and note that it is positive since $\R^n\setminus\overline\Omega$ is
  open. Let $U = \cup_{s>0} B(\gamma(s), r(s))$. Then $U$ is a
  connected open set, $p\in\overline U$ and $U$ reaches infinity. The
  latter implies that $u^s={u'}^s$ in an open ball in $U$, and by
  analyticity we have $u^s={u'}^s$ in $U$. Continuity implies the same
  conclusion at $p$. The proof is complete.
\end{proof}

\subsection{Geometric structure of radiationless sources at admissible $K$-curvature points}

In this subsection, we derive the geometric characterization of a radiationless source at admissible $K$-curvature points on the boundary of its support. We have

\begin{theorem}\label{thm:source2}
  Let $\Omega\subset\R^n$, $n\geq2$ be a bounded domain with diameter
  at most $D$. Consider an active source of the form $\varphi
  \chi_\Omega$ with $\varphi\in C^\alpha(\overline\Omega)$,
  $0<\alpha<1$.  Assume that $p\in\partial\Omega$ is an admissible
  $K$-curvature point with parameters $L, M, \mu$ and $K\geq
  e$. Assume further that $p$ is connected to infinity through
  $\mathbb{R}^n\setminus\overline\Omega$. Then for any given
  wavenumber $k\in\mathbb{R}_+$ and H\"older-smoothness index $\alpha$
  there exists a positive constant $\mathcal E = \mathcal
  E(\alpha,\mu,n,D,L,M,k) \in\R_+$ such that if
  \begin{equation}\label{eq:local1}
    \frac{\abs{\varphi(p)}}{ \max\big(1,
      \norm{\varphi}_{C^\alpha(\overline\Omega)} \big) } \geq \mathcal
    E (\ln K)^{(n+3)/2} K^{-\min(\alpha,\mu)/2},
  \end{equation}
  then the source $\chi_\Omega \varphi$ radiates a non-zero far-field
  pattern at wavenumber $k$.
\end{theorem}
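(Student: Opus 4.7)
The plan is to run a contradiction argument that localises the integral identity \eqref{eq:ff5} to the curved boundary neighbourhood $\Omega_{b,h}$ of $p$, and then to feed into it a complex geometric optics (CGO)–type harmonic test function concentrating near $p$, with parameters tuned to the curvature $K$.

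First, I would suppose that $u^s_\infty\equiv 0$ and deduce, by \cref{Rellich} and the assumption that $p$ is connected to infinity through $\R^n\setminus\overline\Omega$, that the radiating solution $u$ to \eqref{sourceScattering} vanishes in a neighbourhood of $p$ on the exterior side of $\partial\Omega$. Combined with $u\in H^2_0(\Omega)$ and the global $C^{1,\beta}$ regularity delivered by \cref{zeroScatterNorm}, this yields both $u=0$ and $\partial_\nu u=0$ on the curved piece $\Gamma=\{x_n=\omega(x')\}\cap\partial\Omega_{b,h}$. Moreover $u(p)=\nabla u(p)=0$, so a Taylor estimate gives $|u(x)|\leq C\|\varphi\|_{L^\infty}|x-p|^{1+\beta}$ and $|\nabla u(x)|\leq C\|\varphi\|_{L^\infty}|x-p|^{\beta}$ near $p$. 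A separate check using \cref{geomDef}, specifically $b=\sqrt M/K$ and $h=1/K$ together with $K_-\geq K/M$, shows that $\Omega_{b,h}$ has no lateral boundary inside $|x'|=b$, so $\partial\Omega_{b,h}\setminus\Gamma$ is just the flat cap $V\subset\{x_n=h\}$, a disc of radius $b_0\lesssim 1/K$.

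Next, I would apply \cref{lipschitzIdentity} to $\Omega_{b,h}$ with the harmonic test function
\[
  u_0(x)=e^{-\tau x_n+i\tau\,\xi'\cdot x'},\qquad\xi'\in\R^{n-1},\ |\xi'|=1,
\]
where $\tau>0$ is a large parameter to be chosen. This gives
\[
  \varphi(p)\int_{\Omega_{b,h}}u_0\,dx
  =\int_{\Omega_{b,h}}\!(\varphi(p)-\varphi)u_0\,dx
  +k^2\!\int_{\Omega_{b,h}}\!u\,u_0\,dx
  +\int_V\!\bigl(u_0\,\partial_\nu u-u\,\partial_\nu u_0\bigr)\,d\sigma.
\]
The main term on the left is evaluated by stationary phase in $x'$ with complex critical point $x'=i\xi'/(2K)$, giving
\[
  \int_{\Omega_{b,h}}u_0\,dx\sim c_n\,\tau^{-(n+1)/2}K^{-(n-1)/2}e^{-\tau/(4K)}.
\]
The crucial point is that the third-order remainder in $\omega(x')=K|x'|^2+O(|x'|^3)$, bounded by $LK^{1-\delta}|x'|^2$ via item \ref{KpKm} of \cref{geomDef}, evaluated at the critical point $x'\sim K^{-1}$, contributes a \emph{relative} perturbation of size $\tau\cdot LK^{-1-\delta}\sim L K^{-\delta}\log K$, hence is negligible for $K$ large. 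This is the step where $\delta$ enters the final bound.

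I would then estimate the three error terms:
the H\"older error is controlled by $\|\varphi\|_{C^\alpha}\int|x|^\alpha e^{-\tau x_n}dx\lesssim \|\varphi\|_{C^\alpha}\tau^{-(n+1+\alpha)/2}K^{-(n-1+\alpha)/2}$;
the $k^2$ term uses the Taylor bound on $u$ near $p$ to yield a quantity of the form $\|\varphi\|_{L^\infty}\tau^{-(n+2+\beta)/2}K^{-(n+\beta)/2}$, which is of strictly lower order than the H\"older error; and the boundary term over $V$, using $|u|\lesssim\|\varphi\|_{L^\infty}K^{-(1+\beta)}$, $|\nabla u|\lesssim\|\varphi\|_{L^\infty}K^{-\beta}$, $|u_0|=e^{-\tau/K}$, $|\nabla u_0|\lesssim \tau e^{-\tau/K}$ and $|V|\lesssim K^{-(n-1)}$, is of size $\|\varphi\|_{L^\infty}\tau e^{-\tau/K}K^{-n-\beta}$. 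Dividing each error by the main integral produces the ratios
\[
  \text{H\"older:}\ \tau^{-\alpha/2}K^{-\alpha/2}e^{\tau/(4K)},\qquad
  \text{boundary:}\ \tau^{(n+3)/2}K^{-(n-1)/2}K^{-\beta}e^{-3\tau/(4K)},
\]
and the $\omega$-approximation relative error $LK^{-\delta}\log K$.

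The final step is the calibration $\tau=c\,K\log K$ with $c$ depending on $\alpha,\delta,L,M$, together with $\beta$ taken close to $1$. This choice makes $e^{\tau/(4K)}=K^{c/4}$ and $e^{-3\tau/(4K)}=K^{-3c/4}$, so one can balance the H\"older and boundary ratios against each other and against the $\omega$-approximation error; the optimum matches both of them against $(\log K)^{(n+3)/2}K^{-\min(\alpha,\delta)/2}$, which is the reason that both $\alpha/2$ and $\delta/2$ appear. The exponent $(n+3)/2$ of the logarithm is dictated by the $\tau^{(n+3)/2}$ factor in the boundary ratio, which in turn comes from the surface measure $|V|\sim K^{-(n-1)}$ and one derivative of $u_0$. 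Combining these estimates produces an upper bound on $|\varphi(p)|/\max(1,\|\varphi\|_{C^\alpha(\overline\Omega)})$ of the form $\mathcal E(\log K)^{(n+3)/2}K^{-\min(\alpha,\delta)/2}$, contradicting \eqref{eq:local1}.

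The main obstacle I expect is the tension between the boundary term (which forces $\tau/K$ to be at least of logarithmic size) and the H\"older term (whose ratio carries the factor $e^{\tau/(4K)}$, growing polynomially in $K$ once $\tau\sim K\log K$); the stationary phase treatment of the $\omega$-approximation error (replacing a naive Taylor bound of order $LK^{-\delta}e^{\tau/(4K)}$ by the much sharper $LK^{-\delta}\log K$) is what makes the balancing feasible, and carrying out that analysis rigorously in a way that is uniform in $\xi'$, $K$, and the parameters $L,M,\delta$ of \cref{geomDef} is the main technical step.
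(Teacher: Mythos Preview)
Your plan matches the paper's proof: contradiction via \cref{Rellich}, the harmonic CGO $u_0=e^{\rho\cdot x}$ with $\rho=-\tau e_n+i\tau e_1$, the identity of \cref{lipschitzIdentity} localised to $\Omega_{b,h}$, the same decomposition into main term, H\"older remainder, $k^2u$-term and flat-cap boundary integral, and the calibration $\tau\sim K\ln K$. The obstacle you anticipate in your final paragraph does not arise: rather than a stationary-phase analysis at a complex critical point for the perturbed $\omega$, the paper simply invokes the sandwich $K_-|x'|^2\le\omega(x')\le K_+|x'|^2$ from item~\ref{KpKm} of \cref{geomDef}, so that both the main integral and the $\omega$-approximation error reduce to exact Gaussians over paraboloids (\cref{CGOoverParabola,CGOoverSlicedParaboloid}); the resulting ``naive'' relative error $C_{n,L,M}K^{-\delta}e^{\tau/(4K)}$ becomes, with $\tau=4\gamma K\ln K$ and $\gamma=\min(\alpha,\delta)/2$, just $K^{\gamma-\delta}\le K^{-\min(\alpha,\delta)/2}$, already dominated by the boundary contribution, so no refinement is needed.
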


\begin{corollary}\label{cor:source2}
  Consider a source of the form $\chi_\Omega\varphi$ and
  $p\in\partial\Omega$ be an admissible $K$-curvature point as
  described in \cref{thm:source2}. Suppose the strength of the
  source is bounded, namely $\|\varphi\|_{C^\alpha(\overline\Omega)}
  \leq \mathcal M$ is bounded. If the source is radiationless, then
  there exists a constant $\mathcal C=\mathcal
  C(\alpha,\mu,n,D,L,M,k, \mathcal M)$ such that
  \begin{equation}\label{eq:ss1}
    |\varphi(p)|\leq \mathcal C (\ln K)^{(n+3)/2}
    K^{-\min(\alpha,\mu)/2}.
  \end{equation}
  That is, if $K$ is sufficiently large at $p\in\partial\Omega$, then
  the intensity of a radiationless source must be nearly vanishing at
  that high-curvature boundary point.
\end{corollary}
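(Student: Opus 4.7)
The plan is simply to apply \cref{thm:source2} by contraposition. \cref{thm:source2} states that whenever the ratio $|\varphi(p)| / \max(1, \|\varphi\|_{C^\alpha(\overline\Omega)})$ exceeds $\mathcal{E} (\ln K)^{(n+3)/2} K^{-\min(\alpha,\delta)/2}$, the source $\chi_\Omega \varphi$ produces a non-vanishing far-field pattern at the prescribed wavenumber $k$. Since the standing hypothesis of \cref{cor:source2} is precisely that the source is radiationless, the lower bound \eqref{eq:local1} cannot hold; equivalently, its strict negation must be satisfied.

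Accordingly, I would deduce the strict reverse inequality
\[ \frac{|\varphi(p)|}{\max(1, \|\varphi\|_{C^\alpha(\overline\Omega)})} < \mathcal{E} (\ln K)^{(n+3)/2} K^{-\min(\alpha,\delta)/2}. \]
Multiplying both sides by $\max(1, \|\varphi\|_{C^\alpha(\overline\Omega)})$ and invoking the a priori bound $\|\varphi\|_{C^\alpha(\overline\Omega)} \leq \mathcal{M}$ yields $|\varphi(p)| \leq \mathcal{E} \cdot \max(1, \mathcal{M}) \cdot (\ln K)^{(n+3)/2} K^{-\min(\alpha,\delta)/2}$. Setting $\mathcal{C} := \mathcal{E} \cdot \max(1, \mathcal{M})$ produces a constant depending only on $\alpha,\delta,n,D,L,M,k,\mathcal{M}$, matching the dependencies claimed in \eqref{eq:ss1}.

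For the interpretive remark following \eqref{eq:ss1}, it remains only to observe that the right-hand side tends to zero as $K\to\infty$, since the polynomial decay $K^{-\min(\alpha,\delta)/2}$ dominates the logarithmic factor $(\ln K)^{(n+3)/2}$ whenever $\min(\alpha,\delta) > 0$. This justifies the physical statement that the intensity of a radiationless source must be nearly vanishing at sufficiently high-curvature boundary points. Since all of the analytical work has been carried out in the proof of \cref{thm:source2}, there is no substantial obstacle in this derivation; it is essentially a logical repackaging, with the $C^\alpha$-norm bound absorbed into the constant.
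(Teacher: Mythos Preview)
Your proposal is correct and matches the paper's treatment: the paper states \cref{cor:source2} as an immediate corollary of \cref{thm:source2} without a separate proof, and your contrapositive argument together with the absorption of $\max(1,\mathcal M)$ into the constant is exactly the intended derivation.
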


\begin{remark}
Comparing \cref{thm:source1b,thm:source2}, one readily sees that the
global geometrical parameter $\mbox{diam}(\Omega)$ in
\eqref{source1bFormula} is replaced by the local geometrical parameter
$K$ in \eqref{eq:local1}. Hence, \cref{thm:source2} is a local and
geometrized version of \cref{thm:source1b}.
\end{remark}

\begin{remark}
We would like to point out that according to our discussion made after
\eqref{eq:sss1}, all the geometric properties established in
\cref{thm:source1b,thm:source2} and \cref{cor:source1b,cor:source2} can
be equally formulated for functions whose Fourier transforms vanish on
a given sphere.
\end{remark}

To prove \cref{thm:source2}, we need to derive the following auxiliary
technical results.

\begin{lemma} \label{lipschitzIdentity}
  Let $\Omega\subset\R^n$ be a bounded Lipschitz domain and
  $k\in\mathbb{R}_+$ be a fixed wavenumber. Let $u_0,u\in H^2(\Omega)$
  and $\varphi\in L^\infty(\Omega)$ satisfy
  \begin{equation}\label{eq:ff4}
    (\Delta+k^2)u=\varphi, \qquad \Delta u_0=0
  \end{equation}
  in $\Omega$. If $u=0$ and $\partial_\nu u = 0$ on $\Gamma \subset
  \partial \Omega$ then
  \begin{equation}\label{eq:ff5}
    \int_\Omega \big( \varphi - k^2 u\big) u_0 dx =
    \int_{\partial\Omega\setminus\Gamma} \big( u_0 \partial_\nu u - u
    \partial_\nu u_0 \big) d\sigma
  \end{equation}
  where $\nu$ is the exterior unit normal vector on $\partial\Omega$.
\end{lemma}
\begin{proof}
  By \eqref{eq:ff4}, we first have $\varphi - k^2 u = \Delta u$. They
  using integration by parts, one can deduce
  \begin{equation}
    \begin{split}
      \int_{\Omega} (\varphi-k^2 u) u_0 dx=\int_\Omega (\Delta u u_0-
      u\Delta u_0) dx =\int_{\partial\Omega} \big( u_0 \partial_\nu u
      - u \partial_\nu u_0 \big) d\sigma,
    \end{split}
  \end{equation}
  which completes the proof. 
\end{proof}

\begin{lemma}\label{integralSplit1}
  Let $\Omega\subset\mathbb{R}^n$ be a domain and $0\in\partial\Omega$
  be an admissible $K$-curvature point with parameters $L, M, \mu$.
  Let $\Omega_{b,h}$ be given in \eqref{OmegaBHdef} in \cref{geomDef}
  associated with $0\in\partial\Omega$. Let $u_0(x)=\exp(\rho\cdot x)$
  where $\rho\in\C^n$, $\rho\cdot\rho=0$ and assume that $w \in
  H^2(\Omega_{b,h}) \cap C^0(\overline{\Omega_{b,h}})$, $\varphi\in
  L^\infty(\Omega_{b,h})$ satisfy $(\Delta+k^2)w=\varphi$ for some
  $k>0$, and $w=\partial_\nu w=0$ on $\overline{\Omega_{b,h}} \cap
  \partial\Omega$. There holds,
  \begin{equation}\label{eq:dd1b}
  \begin{split}
    &\varphi(0) \int_{x_n>K\abs{x'}^2} e^{\rho\cdot x} dx = \varphi(0)
    \int_{x_n>\max(h,K\abs{x'}^2)} e^{\rho\cdot x} dx \\&\qquad +
    \varphi(0) \Bigg( \int_{K\abs{x'}^2<x_n<h} e^{\rho\cdot x} dx -
    \int_{\Omega_{b,h}} e^{\rho\cdot x} dx \Bigg) \\ &\qquad -
    \int_{\Omega_{b,h}} e^{\rho\cdot x} \big( \varphi(x)-\varphi(0) -
    k^2 \big(w(x)-w(0)\big) \big) dx \\ &\qquad +
    \int_{\partial\Omega_{b,h} \setminus \partial\Omega} \big(
    e^{\rho\cdot x} \partial_\nu w - w \partial_\nu e^{\rho\cdot x}
    \big) d\sigma.
  \end{split}
  \end{equation}
\end{lemma}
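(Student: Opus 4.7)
The plan is to recognize that \eqref{eq:dd1b} is an algebraic rearrangement built around a single application of Green's second identity (Lemma \ref{lipschitzIdentity}) to $w$ and the harmonic exponential $u_0(x)=e^{\rho\cdot x}$; the rest is bookkeeping.

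First I would dispose of the paraboloid term on the left. Since $h=1/K$ and $b=\sqrt{M}/K$ with $M\geq 1$, the slab $\{K|x'|^2<x_n<h\}$ lives over $\{|x'|<1/K\}\subset B(0,b)$, so the half-paraboloid decomposes as a disjoint union
\[
\{x_n>K|x'|^2\}=\{x_n>\max(h,K|x'|^2)\}\cup\{K|x'|^2<x_n<h\},
\]
which yields
\[
\int_{x_n>K|x'|^2} e^{\rho\cdot x}\,dx=\int_{x_n>\max(h,K|x'|^2)} e^{\rho\cdot x}\,dx+\int_{K|x'|^2<x_n<h} e^{\rho\cdot x}\,dx.
\]
Multiplying by $\varphi(0)$ and cancelling against the first two terms on the right-hand side of \eqref{eq:dd1b} reduces the claim to verifying
\begin{multline*}
\varphi(0)\int_{\Omega_{b,h}} e^{\rho\cdot x}\,dx+\int_{\Omega_{b,h}} e^{\rho\cdot x}\bigl(\varphi(x)-\varphi(0)-k^{2}(w(x)-w(0))\bigr)dx\\=\int_{\partial\Omega_{b,h}\setminus\partial\Omega}\bigl(e^{\rho\cdot x}\partial_\nu w-w\,\partial_\nu e^{\rho\cdot x}\bigr)d\sigma.
\end{multline*}

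Next I would expand the left-hand side. The two $\varphi(0)\int_{\Omega_{b,h}}e^{\rho\cdot x}\,dx$ contributions cancel, leaving
\[
\int_{\Omega_{b,h}}e^{\rho\cdot x}(\varphi-k^{2}w)\,dx+k^{2}w(0)\int_{\Omega_{b,h}}e^{\rho\cdot x}\,dx.
\]
Because $0\in\overline{\Omega_{b,h}}\cap\partial\Omega$, $w\in C^{0}(\overline{\Omega_{b,h}})$, and $w$ vanishes on $\overline{\Omega_{b,h}}\cap\partial\Omega$, continuity yields $w(0)=0$, so the second term drops out. The PDE $(\Delta+k^{2})w=\varphi$ then rewrites the first term as $\int_{\Omega_{b,h}}e^{\rho\cdot x}\Delta w\,dx$.

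Finally, I would invoke Lemma \ref{lipschitzIdentity} with domain $\Omega_{b,h}$, $u=w$, $u_{0}(x)=e^{\rho\cdot x}$ (harmonic since $\rho\cdot\rho=0$), and $\Gamma=\overline{\Omega_{b,h}}\cap\partial\Omega$; the hypotheses $w|_{\Gamma}=\partial_{\nu}w|_{\Gamma}=0$ hold by assumption, producing exactly the claimed boundary integral over $\partial\Omega_{b,h}\setminus\partial\Omega$. The only item worth a brief check is that $\Omega_{b,h}$ is itself a bounded Lipschitz domain so that Lemma \ref{lipschitzIdentity} is applicable; this is inherited from the $C^{3}$ regularity of the graph function $\omega$ together with condition~(5) of \cref{geomDef}. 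The main conceptual obstacle is simply parsing the statement carefully: once the paraboloid decomposition and $w(0)=0$ are recorded, the whole identity collapses to a single use of Green's formula, and there is no substantive analytic difficulty.
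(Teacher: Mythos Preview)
Your proof is correct and follows essentially the same route as the paper: both reduce the identity, via the paraboloid decomposition and the observation $w(0)=0$, to a single application of Lemma~\ref{lipschitzIdentity} with the harmonic function $u_0=e^{\rho\cdot x}$ on the Lipschitz domain $\Omega_{b,h}$. Your write-up is in fact more explicit about the algebraic bookkeeping than the paper's, which simply calls the simplification ``straightforward calculations.''
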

\begin{proof}
  By straightforward calculations, one can first simplify the
  right-hand side of \eqref{eq:dd1b} to be
  \begin{equation}\label{eq:dd2}
  \begin{split}
    &\varphi(0) \int_{x_n>K\abs{x'}^2} e^{\rho\cdot x} dx -
    \int_{\Omega_{b,h}} e^{\rho\cdot x} \big( \varphi(x) - k^2
    \big(w(x)-w(0)\big) \big) dx \\ &\qquad + \int_{\partial\Omega_{b,h}
      \setminus \partial\Omega} \big( e^{\rho\cdot x} \partial_\nu w -
    w \partial_\nu e^{\rho\cdot x} \big) d\sigma.
  \end{split}
  \end{equation}
  Noting that $w(0)=0$, one has from \eqref{eq:dd1b} and
  \eqref{eq:dd2} a similar identity to that in \eqref{eq:ff5}. Since
  $\rho\cdot\rho=0$, it is clear that $\Delta u_0=0$, and hence the
  claim follows from \cref{lipschitzIdentity} because $\Omega_{b,h}$
  is a Lipschitz domain. The proof is complete.
\end{proof}

\begin{lemma}\label{CGOoverParabola}
  Let $K\in\R_+$, $\rho\in\C^n$ with $\Re\rho_n<0$, and $C_{\infty} =
  \{ x \in \R^n \mid x_n > K\abs{x'}^2 \}$. Then one has
  \begin{equation}\label{eq:dd3}
  \int_{C_\infty} e^{\rho\cdot x} dx = \frac{1}{-\rho_n}
  \left(\frac{\pi}{-\rho_n K}\right)^{(n-1)/2}
  \exp\left(-\frac{\rho'\cdot\rho'}{4\rho_n K}\right),
  \end{equation}
  where $\rho'\cdot\rho' = \rho_1^2 + \cdots + \rho_{n-1}^2$.
\end{lemma}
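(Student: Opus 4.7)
The plan is to reduce the computation to a standard Gaussian integral by iterating in $x_n$ first and then completing the square in the transverse variable $x'$. Specifically, since $C_\infty$ is described as $\{(x',x_n) : x_n > K|x'|^2\}$, I will invoke Fubini's theorem (valid because $\Re\rho_n<0$ gives absolute integrability: $|e^{\rho\cdot x}| = e^{\Re\rho'\cdot x' + \Re\rho_n x_n}$ decays fast enough in $x_n$, and then the resulting Gaussian in $x'$ is integrable) and write
\[
\int_{C_\infty} e^{\rho\cdot x}\,dx = \int_{\R^{n-1}} e^{\rho'\cdot x'} \left( \int_{K\abs{x'}^2}^\infty e^{\rho_n x_n}\,dx_n \right) dx'.
\]
The inner integral evaluates to $\frac{1}{-\rho_n} e^{\rho_n K\abs{x'}^2}$, giving a factor $1/(-\rho_n)$ outside and leaving $\int_{\R^{n-1}} e^{\rho'\cdot x' + \rho_n K\abs{x'}^2}\,dx'$.

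Next I complete the square componentwise: for each $j=1,\ldots,n-1$,
\[
\rho_n K x_j^2 + \rho_j x_j = \rho_n K \bigl(x_j + \tfrac{\rho_j}{2\rho_n K}\bigr)^2 - \tfrac{\rho_j^2}{4\rho_n K},
\]
so that summing produces the claimed prefactor $\exp(-\rho'\cdot\rho'/(4\rho_n K))$ together with the shifted Gaussian $\int_{\R^{n-1}} \exp\bigl(\rho_n K\sum_j (x_j + c_j)^2\bigr)\,dx'$ where $c_j = \rho_j/(2\rho_n K)\in\C$.

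The one genuine point requiring care is that the shifts $c_j$ are complex, so the product of Gaussians is not immediately a real Gaussian. This is where I would apply Cauchy's theorem separately in each variable: for fixed real values of the other coordinates, the entire function $z\mapsto \exp(\rho_n K (z+c_j)^2)$ decays as $|\Re z|\to\infty$ (because $\Re\rho_n<0$ makes the real part of the exponent go to $-\infty$ like $\Re\rho_n\cdot K\,(\Re z)^2$), so the contour from $\R$ to $\R-c_j$ can be closed at infinity with vanishing side contributions. Iterating this contour shift in each coordinate reduces the integral to $\int_{\R^{n-1}} e^{\rho_n K|y'|^2}\,dy' = \bigl(\int_\R e^{\rho_n K y^2}\,dy\bigr)^{n-1}$.

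Finally, the one-dimensional Gaussian identity $\int_\R e^{-ay^2}\,dy = \sqrt{\pi/a}$ applies with $a = -\rho_n K$, whose real part is $-K\Re\rho_n>0$; taking the principal branch of the square root and raising to the power $n-1$ yields $\bigl(\pi/(-\rho_n K)\bigr)^{(n-1)/2}$. Multiplying through by the prefactors $1/(-\rho_n)$ and $\exp(-\rho'\cdot\rho'/(4\rho_n K))$ produces exactly the formula stated in the lemma. I expect the only subtle point to be justifying the complex contour shift rigorously; everything else is a routine Fubini-plus-Gaussian computation.
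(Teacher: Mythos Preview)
Your proof is correct and follows essentially the same approach as the paper: Fubini in $x_n$ first, then evaluation of the resulting complex Gaussian in $x'$ via the one-dimensional formula $\int_\R e^{At^2+Bt}\,dt = \sqrt{-\pi/A}\,e^{-B^2/(4A)}$ for $\Re A<0$. The only difference is cosmetic---you spell out the contour-shift justification for the complex Gaussian, whereas the paper simply quotes the formula.
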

\begin{proof}
  We have
  \begin{equation}\label{eq:dd4}
  \int_{C_\infty} e^{\rho\cdot x} dx = \int_{\R^{n-1}} e^{\rho'\cdot
    x'} \int_{K\abs{x'}^2}^\infty e^{\rho_n x_n} dx_n dx' =
  -\frac{1}{\rho_n} \int_{\R^{n-1}} e^{\rho_n K \abs{x'}^2 +
    \rho'\cdot x'} dx'.
  \end{equation}
  It is easily seen that the right-hand side term of \eqref{eq:dd4} is
  the product of integrals of the form $\int_{-\infty}^\infty
  \exp({\rho_n K x_j^2 + \rho_j x_j}) dx_j$ with $j=1,\ldots,n-1$. The
  integration formula for a complex Gaussian gives
  \[
  \int_{-\infty}^\infty e^{At^2+Bt} dt = \sqrt{-\frac{\pi}{A}}
  \exp\left(-\frac{B^2}{4A}\right)
  \]
  when $\Re A<0$. We have $\Re\rho_n K<0$ and thus
  \begin{equation}\label{eq:dd5}
  \int_{-\infty}^\infty e^{\rho_n K x_j^2 + \rho_j x_j} dx_j =
  \sqrt{-\frac{\pi}{\rho_n K}} \exp\left( -\frac{\rho_j^2}{4\rho_n K}
  \right),\ \ \ j=1,\ldots, n-1. 
  \end{equation}
  The claim follows by plugging \eqref{eq:dd5} into \eqref{eq:dd4},
  along with some straightforward calculations. The proof is complete.
\end{proof}

\begin{lemma}\label{paraboloidUpperBound}
  Let $\tau,K,h\in\R_+$, $s\geq0$ and $C_h = \{ x \in \R^n \mid h >
  x_n > K\abs{x'}^2 \}$. Then there holds
  \begin{equation}\label{eq:ee1}
  \int_{C_h} e^{-\tau x_n} \abs{x}^s dx \leq C_{n,s}
  \left(h+K^{-1}\right)^{\frac{s}{2}} h^{\frac{n+s+1}{2}}
  K^{-\frac{n-1}{2}}, 
  \end{equation}
  where
  \[
  C_{n,s} = \frac{\sigma(\mathbb S^{n-2})}{1+s/2}.
  \]
\end{lemma}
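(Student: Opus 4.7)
The statement is elementary and my approach is to reduce the $n$-dimensional integral to a one-dimensional integral in $x_n$ by first using the defining inequality of $C_h$ to bound $\abs{x}$ pointwise. Since $\tau>0$ and $x_n>K\abs{x'}^2\ge 0$ on $C_h$, one has $e^{-\tau x_n} \leq 1$ and the exponential can be dropped at the outset. The key geometric observation is that the constraint $K\abs{x'}^2 < x_n \leq h$ yields both $\abs{x'}^2 \leq x_n/K$ and $x_n^2 \leq h\, x_n$, and therefore
\[
\abs{x}^2 = \abs{x'}^2 + x_n^2 \leq (K^{-1}+h)\, x_n, \qquad \abs{x}^s \leq (K^{-1}+h)^{s/2} x_n^{s/2}.
\]

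I would then switch to polar coordinates in the $x'$-variable, writing $x' = r\omega'$ with $r=\abs{x'}$ and $\omega'\in\mathbb{S}^{n-2}$, so that $dx'=r^{n-2}\,dr\,d\omega'$. Taking $x_n \in (0,h)$ as the outer variable and $r \in (0,\sqrt{x_n/K})$ as the inner, the angular integration contributes the factor $\sigma(\mathbb{S}^{n-2})$, while the radial piece is bounded crudely via $\int_0^{\sqrt{x_n/K}} r^{n-2}\, dr \leq (x_n/K)^{(n-1)/2}$ (using $r \leq \sqrt{x_n/K}$). Combined with the pointwise bound on $\abs{x}^s$, this reduces matters to
\[
\int_{C_h} e^{-\tau x_n}\abs{x}^s\, dx \leq \sigma(\mathbb{S}^{n-2}) (K^{-1}+h)^{s/2} K^{-(n-1)/2} \int_0^h x_n^{(s+n-1)/2}\, dx_n.
\]

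To land exactly on the stated constant $1/(1+s/2)$, the last step is to absorb the surplus factor by $x_n^{(n-1)/2} \leq h^{(n-1)/2}$ under the integral and then evaluate $\int_0^h x_n^{s/2}\, dx_n = h^{1+s/2}/(1+s/2)$; the two resulting powers of $h$ combine into the exponent $(n+s+1)/2$, giving the claim with $C_{n,s}=\sigma(\mathbb{S}^{n-2})/(1+s/2)$. There is no real obstacle in the argument; the only small care is to perform these somewhat wasteful estimates in the correct order so that the denominator in $C_{n,s}$ reads $1+s/2$ rather than the tighter $(s+n+1)/2$ one would naively obtain from direct integration.
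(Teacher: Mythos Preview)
Your proof is correct and essentially identical to the paper's own argument: both pass to polar coordinates in $x'$, bound $\abs{x'}^2\le x_n/K$ and $x_n\le h$ to replace $(x_n^2+\abs{x'}^2)^{s/2}$ by $(h+K^{-1})^{s/2}x_n^{s/2}$, crudely estimate the radial integral by $(x_n/K)^{(n-1)/2}$, and then pull out $x_n^{(n-1)/2}\le h^{(n-1)/2}$ before integrating $x_n^{s/2}$ to produce the denominator $1+s/2$. The only cosmetic difference is that the paper bounds $e^{-\tau x_n}\le1$ at the end rather than at the outset and applies the pointwise bound on $\abs{x}$ after introducing polar coordinates, but the estimates are step-for-step the same.
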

\begin{proof}
  First, we note that a horizontal slice of the paraboloid $C_h$ has a
  radius $\sqrt{x_n/K}$ at the height $x_n$. Hence
  \begin{equation}\label{eq:ee2}
    \int_{C_h} e^{-\tau x_n} \abs{x}^s dx = \int_0^h e^{-\tau x_n}
    \int_{B(0,\sqrt{x_n/K})} (x_n^2 + \abs{x'}^2)^{s/2} dx' dx_n.
  \end{equation}
  By using the polar coordinates $x'=r\theta$, $\theta\in\mathbb
  S^{n-2}$, $r=|x|$, one has that
  \begin{equation}\label{eq:ee3}
    \begin{split}
      & \int_0^h e^{-\tau x_n} \int_{B(0,\sqrt{x_n/K})} (x_n^2 +
      \abs{x'}^2)^{s/2} dx' dx_n\\ = & \sigma(\mathbb S^{n-2})
      \int_0^h e^{-\tau x_n} \int_0^{\sqrt{x_n/K}} (x_n^2 + r^2)^{s/2}
      r^{n-2} dr dx_n.
    \end{split}
  \end{equation}
  Taking the upper bound of the integrands, once for $r$, and then for
  $x_n$, one can further show that
  \begin{equation}\label{eq:ee4}
    \begin{split}
      & \sigma(\mathbb S^{n-2}) \int_0^h e^{-\tau x_n}
      \int_0^{\sqrt{x_n/K}} (x_n^2 + r^2)^{s/2} r^{n-2} dr dx_n\\ \leq
      & \sigma(\mathbb S^{n-2}) \int_0^h e^{-\tau x_n} \left(x_n^2 +
      \frac{x_n}{K}\right)^{\frac{s}{2}}
      \left(\frac{x_n}{K}\right)^{\frac{n-1}{2}} dx_n \\ \leq &
      \sigma(\mathbb S^{n-2}) \left(h +
      \frac{1}{K}\right)^{\frac{s}{2}}
      \left(\frac{h}{K}\right)^{\frac{n-1}{2}}
      \frac{h^{1+\frac{s}{2}}}{1+\frac{s}{2}}.
    \end{split}
  \end{equation}
  Finally, by combining \eqref{eq:ee2}, \eqref{eq:ee3} and
  \eqref{eq:ee4}, one readily has \eqref{eq:ee1}. The proof is
  complete.
\end{proof}

\begin{lemma}\label{CGOoverCutParaboloid}
  Let $\tau,K,h\in\R_+$ and $C_\ell = \{ x \in \R^n \mid \ell > x_n >
  K\abs{x'}^2 \}$ for any $\ell\in\R_+ \cup \{\infty\}$. Then there holds
  \begin{equation}\label{eq:ff1b}
  \int_{C_\infty \setminus {C_h}} e^{-\tau x_n} dx \leq C_n
  \frac{1+(\tau h)^{\frac{n-1}{2}}}{\tau^{\frac{n+1}{2}}
    K^{\frac{n-1}{2}}} e^{-\tau h},
  \end{equation}
  where $C_n$ depends only on $n$.
\end{lemma}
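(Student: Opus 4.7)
The plan is to reduce the integral to a one-dimensional expression by foliating $C_\infty \setminus C_h$ along the $x_n$-axis, and then to recognize the resulting integral as an incomplete gamma function, which admits a clean tail bound.

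First I would slice by the horizontal hyperplanes $\{x_n = t\}$. For $t \geq h$, the slice is exactly $\{x' \in \R^{n-1} \mid \abs{x'}^2 < t/K\}$, namely a ball of radius $\sqrt{t/K}$, which has $(n-1)$-dimensional volume $\omega_{n-1}(t/K)^{(n-1)/2}$, where $\omega_{n-1}$ denotes the volume of the unit ball in $\R^{n-1}$. Fubini then yields
\[
\int_{C_\infty \setminus C_h} e^{-\tau x_n}\, dx \;=\; \frac{\omega_{n-1}}{K^{(n-1)/2}} \int_h^\infty t^{(n-1)/2} e^{-\tau t}\, dt.
\]
The substitution $s = \tau t$ turns the right-hand side into
\[
\frac{\omega_{n-1}}{K^{(n-1)/2}\,\tau^{(n+1)/2}} \int_{\tau h}^\infty s^{(n-1)/2} e^{-s}\, ds.
\]

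The remaining analytic task is to bound the incomplete gamma integral $I_m(a) := \int_a^\infty s^m e^{-s}\, ds$ by a constant multiple of $(1+a^m)e^{-a}$, with constant depending only on $m=(n-1)/2$. For $a\leq 1$ one simply uses $I_m(a) \leq \Gamma(m+1) \leq e\,\Gamma(m+1)(1+a^m)e^{-a}$. For $a \geq 1$ one iterates integration by parts $\lceil m \rceil$ times to produce boundary terms of the form $c_{m,j}a^{m-j}e^{-a}$, each of which is $\leq c_{m,j}(1+a^m)e^{-a}$, with a harmless final integral with non-positive exponent bounded by $e^{-a}$. Applying this estimate with $a=\tau h$ and absorbing all combinatorial factors into a single constant $C_n$ yields the stated inequality.

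There is no genuine obstacle here: the geometry of $C_\infty\setminus C_h$ is explicit and the reduction is purely computational. The only point requiring a little care is ensuring that the resulting constant depends only on $n$; this forces one to treat the regimes $\tau h \leq 1$ and $\tau h \geq 1$ separately in the gamma estimate so as to capture both the $1$ and the $(\tau h)^{(n-1)/2}$ contributions in the numerator of the final bound.
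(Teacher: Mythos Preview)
Your proposal is correct and follows essentially the same route as the paper: slice $C_\infty\setminus C_h$ by horizontal hyperplanes, compute the $(n-1)$-volume of each ball slice, reduce to $\int_h^\infty t^{(n-1)/2}e^{-\tau t}\,dt$, and substitute $s=\tau t$ to land on an upper incomplete gamma integral. The only minor difference is in how the tail $\int_{\tau h}^\infty s^{(n-1)/2}e^{-s}\,ds$ is estimated: the paper shifts $s\mapsto s+\tau h$ and applies the elementary inequality $(A+B)^{a-1}\leq \max(1,2^{a-2})(A^{a-1}+B^{a-1})$ to split off $\Gamma\big(\tfrac{n+1}{2}\big)+(\tau h)^{(n-1)/2}$ directly, whereas you split into the regimes $\tau h\leq 1$ and $\tau h\geq 1$ and integrate by parts; both yield a constant depending only on $n$.
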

\begin{proof}
 The proof proceeds as that of \cref{paraboloidUpperBound} but with
 $s=0$ and $x_n\in{({h,\infty})}$ instead of
 $x_n\in{({0,\infty})}$. Changing to polar coodinates $x'=r\theta$,
 $\theta\in\mathbb S^{n-2}$, $r=\abs{x'}$, and integrating
 $\int_0^{r_{max}} r^{n-2} dr = r_{max}^{n-1}/(n-1)$ with $r_{max} =
 \sqrt{x_n/K}$, one has that
  \begin{equation}\label{eq:ff2}
    \begin{split}
      & \int_{C_\infty \setminus C_h} e^{-\tau x_n} dx = \int_h^\infty
      e^{-\tau x_n} \int_{B(0,\sqrt{x_n/K})} dx' dx_n \\ &\qquad =
      \sigma(\mathbb S^{n-2}) \int_h^\infty e^{-\tau x_n}
      \int_0^{\sqrt{x_n/K}} r^{n-2} dr dx_n \\ &\qquad =
      \frac{\sigma(\mathbb S^{n-2})}{n-1} \int_h^\infty e^{-\tau x_n}
      \left(\frac{x_n}{K}\right)^{\frac{n-1}{2}} dx_n.
    \end{split}
  \end{equation}
  Next, by using the change of variables $t = \tau x_n$, one can
  further show that
  \begin{equation}\label{eq:ff3b}
    \frac{\sigma(\mathbb S^{n-2})}{n-1} \int_h^\infty e^{-\tau x_n}
    \left(\frac{x_n}{K}\right)^{\frac{n-1}{2}} dx_n =
    \frac{\sigma(\mathbb S^{n-2})}{n-1} \frac{1}{\tau (\tau
      K)^{\frac{n-1}{2}}} \int_{\tau h}^\infty e^{-t} t^{\frac{n+1}{2}
      - 1} dt.
  \end{equation}
  Switching to $s = t - \tau h$ in the last integral in
  \eqref{eq:ff3b} allows us to estimate it as follows. Recall the
  definition of the $\Gamma$-function $\Gamma(m) = \int_0^\infty
  e^{-s} s^{m-1} ds$, and also that $(A+B)^{a-1} \leq \max(1,2^{a-2})
  (A^{a-1} + B^{a-1})$ for $a>1$. We thus have
  \begin{equation}\label{eq:ff4b}
    \begin{split}
      &\int_{\tau h}^\infty e^{-t} t^{\frac{n-1}{2}} dt = e^{-\tau h}
      \int_0^\infty e^{-s} (s+\tau h)^{\frac{n+1}{2}-1} ds \\ &\qquad
      \leq e^{-\tau h} \max\big(1,2^{\frac{n+1}{2}-2}\big) \left(
      \Gamma\left(\frac{n+1}{2}\right) + (\tau h)^\frac{n-1}{2}
      \right).
    \end{split}
  \end{equation}
  Finally, by combining \eqref{eq:ff2}, \eqref{eq:ff3b} and
  \eqref{eq:ff4b}, one can readily verify \eqref{eq:ff1b}. The proof
  is complete.
\end{proof}

\begin{lemma}\label{CGOoverSlicedParaboloid}
  Let $\tau,K_-,K_+,h\in\R_+$ with $K_+>K_-$ and denote $C_\pm = \{ x
  \in \R^n \mid K_\pm \abs{x'}^2 < x_n < h\}$. Then there hold
  \begin{equation}\label{eq:gg1}
  \int_{C_- \setminus C_+} e^{-\tau x_n} dx = \frac{\sigma(\mathbb
    S^{n-2})}{n-1} \left( \Big(\frac{1}{K_-}\Big)^{\frac{n-1}{2}} -
  \Big(\frac{1}{K_+}\Big)^{\frac{n-1}{2}} \right)
  \frac{1}{\tau^{\frac{n+1}{2}}} \, \gamma\Big(\tau h,
  \frac{n+1}{2}\Big),
  \end{equation}
  where 
  \begin{equation}\label{eq:gamma}
  \gamma(x,a) := \int_0^x \exp(-t) t^{a-1} dt,\quad a\in\mathbb{C},
  \end{equation}
 is the lower incomplete gamma function.
\end{lemma}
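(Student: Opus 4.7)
The plan is to compute the integral by slicing in the $x_n$-direction, reducing it to a one-dimensional integral that matches the definition of the lower incomplete gamma function.

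First I would describe the set $C_- \setminus C_+$ geometrically. Since $K_+ > K_-$, the inclusion $C_+ \subset C_-$ holds, so $C_- \setminus C_+$ consists of those $x \in \R^n$ with $0 < x_n < h$ and $K_- \abs{x'}^2 < x_n \leq K_+ \abs{x'}^2$. Solving the two inequalities on $\abs{x'}$ for fixed $x_n \in (0,h)$ shows that the horizontal slice is the annulus $\sqrt{x_n/K_+} \leq \abs{x'} < \sqrt{x_n/K_-}$ in $\R^{n-1}$. Applying Fubini's theorem yields
\begin{equation*}
  \int_{C_- \setminus C_+} e^{-\tau x_n} dx = \int_0^h e^{-\tau x_n} \Big( \int_{\sqrt{x_n/K_+} \leq \abs{x'} < \sqrt{x_n/K_-}} dx' \Big) dx_n.
\end{equation*}

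Next, I would evaluate the inner annular integral using polar coordinates $x' = r\theta$, $\theta \in \mathbb{S}^{n-2}$, which gives
\begin{equation*}
  \int_{\sqrt{x_n/K_+}}^{\sqrt{x_n/K_-}} \sigma(\mathbb{S}^{n-2}) r^{n-2} dr = \frac{\sigma(\mathbb{S}^{n-2})}{n-1} \Big( \Big(\frac{x_n}{K_-}\Big)^{(n-1)/2} - \Big(\frac{x_n}{K_+}\Big)^{(n-1)/2} \Big).
\end{equation*}
Factoring out the $x_n$-independent coefficient $\frac{\sigma(\mathbb{S}^{n-2})}{n-1}\big(K_-^{-(n-1)/2} - K_+^{-(n-1)/2}\big)$ leaves the single integral $\int_0^h e^{-\tau x_n} x_n^{(n-1)/2} dx_n$.

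Finally, I would perform the substitution $t = \tau x_n$ in that remaining integral, which converts it to $\tau^{-(n+1)/2} \int_0^{\tau h} e^{-t} t^{(n+1)/2 - 1} dt$, and this equals $\tau^{-(n+1)/2}\gamma(\tau h, (n+1)/2)$ by the definition \eqref{eq:gamma}. Combining the pieces produces exactly the claimed identity. There is no real obstacle here; the only thing to be careful about is identifying the annular slice correctly (the direction of the set difference can be confusing since $C_+ \subset C_-$ despite the naming).
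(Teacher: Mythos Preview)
Your proof is correct and follows essentially the same approach as the paper: slice in the $x_n$-direction to get an annular cross-section, integrate in polar coordinates to obtain $\frac{\sigma(\mathbb S^{n-2})}{n-1}\big(K_-^{-(n-1)/2}-K_+^{-(n-1)/2}\big)x_n^{(n-1)/2}$, and then substitute $t=\tau x_n$ to recognize the lower incomplete gamma function. The paper's argument is identical in structure and detail.
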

\begin{proof}
  The proof can be proceeded as that of
  \cref{CGOoverCutParaboloid}. We first note that the horizontal cut
  at the height $x_h$ is an annulus of external radius
  $\sqrt{x_n/K_-}$ and internal radius $\sqrt{x_n/K_+}$. By using the
  polar coordinates in integrals and the fact that$\int_a^b r^{n-2} dr
  = (b^{n-1}-a^{n-1})/(n-1)$, one can deduce as follows
  \begin{equation}\label{eq:gg2}
    \begin{split}
      & \int_{C_- \setminus C_+} e^{-\tau x_n} dx = \int_0^h e^{-\tau
        x_n} \int_{\sqrt{x_n/K_+} < \abs{x'} < \sqrt{x_n/K_-}} dx'
      dx_n \\ &\qquad = \sigma(\mathbb S^{n-2}) \int_0^h e^{-\tau x_n}
      \int_{\sqrt{x_n/K_+}}^{\sqrt{x_n/K_-}} r^{n-2} dr dx_n
      \\ &\qquad = \frac{\sigma(\mathbb S^{n-2})}{n-1} \left(
      \left(\frac{1}{K_-}\right)^{\frac{n-1}{2}} -
      \left(\frac{1}{K_+}\right)^{\frac{n-1}{2}} \right) \int_0^h
      e^{-\tau x_n} x_n^{\frac{n-1}{2}} dx_n.
    \end{split}
  \end{equation}
  Next, using again the change of variables $t = \tau x_n$ in the last
  integral in \eqref{eq:gg2}, along with the definition of the
  incomplete $\Gamma$-function, one can further show that
  \begin{equation}\label{eq:gg3}
    \int_0^h e^{-\tau x_n} x_n^{\frac{n-1}{2}} dx_n =
    \frac{1}{\tau^{\frac{n+1}{2}}} \int_0^{\tau h} e^{-t}
    t^{\frac{n+1}{2} - 1} dt.
  \end{equation}
  Finally, by combining \eqref{eq:gg2} and \eqref{eq:gg3}, one can
  readily show \eqref{eq:gg1}. The proof is complete.
\end{proof}

\begin{proposition}\label{integralEstimates1}
  Let $\Omega\subset\R^n$, $n\geq 2$ be a domain and
  $p\in\partial\Omega$ be an admissible $K$-curvature point with
  parameters $L, M, \mu$.  Let $\Omega_{b,h}$ be introduced in
  \eqref{OmegaBHdef} in \cref{geomDef} associated with
  $p\in\partial\Omega$.

  Let $x$ be the coordinates for which $p=0$ in \cref{geomDef}. Let
  $u_0(x)=\exp(\rho\cdot x)$, where $\rho = i\tau e_1 - \tau e_n$ with
  $\tau\in\R_+$, and assume that $w \in H^2(\Omega_{b,h}) \cap
  C^{1,\beta}(\overline{\Omega_{b,h}})$, $0<\beta\leq1$, and
  $\varphi\in L^\infty(\Omega_{b,h})$ satisfy $(\Delta+k^2)w=\varphi$
  in $\Omega_{b,h}$ for some $k>0$, and $w=\partial_\nu w=0$ on
  $\overline{\Omega_{b,h}} \cap \partial\Omega$. Then there holds
\begin{equation}\label{eq:hh1}
  \begin{split}
    &C_{n,\alpha} \abs{\varphi(p)} \sqrt{\pi} \leq (1+(\tau
    h)^{(n-1)/2}) e^{\tau (\frac{1}{4K}-h)} + \left(
    \left(\frac{K}{K_-}\right)^{\frac{n-1}{2}} -
    \left(\frac{K}{K_+}\right)^{\frac{n-1}{2}} \right)
    e^{\frac{\tau}{4K}} \\ &\qquad \quad +
    \big(\norm{\varphi}_{C^\alpha} + k^2 \norm{w}_{C^{1,\beta}}\big)
    (h+K_-^{-1})^{\alpha/2} h^{(n+\alpha+1)/2} (K/K_-)^{(n-1)/2}
    \tau^{3/2} e^{\frac{\tau}{4K}} \\ &\qquad \quad +
    h^{\beta+(n-1)/2} (K/K_-)^{(n-1)/2} (1+\tau h) \tau^{(n+1)/2}
    e^{\tau (\frac{1}{4K} - h)} \norm{w}_{C^{1,\beta}},
  \end{split}
  \end{equation}
  where $C_{n,\alpha}$ is a positive number.
\end{proposition}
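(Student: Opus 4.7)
The plan is to apply the integral identity in \cref{integralSplit1} with the complex geometric optics phase $\rho = i\tau e_1 - \tau e_n$ (which satisfies $\rho\cdot\rho = 0$), evaluate the main integral on the left exactly using \cref{CGOoverParabola}, and control each of the four remainder integrals with the preceding computational lemmas before dividing through. Since $\rho_n = -\tau$ and $\rho'\cdot\rho' = -\tau^2$, the main integral is
\[
  \int_{x_n > K|x'|^2} e^{\rho\cdot x}\,dx = \frac{1}{\tau}\Bigl(\frac{\pi}{\tau K}\Bigr)^{(n-1)/2} e^{-\tau/(4K)},
\]
and after dividing the identity by this quantity, a factor $\tau^{(n+1)/2} K^{(n-1)/2} \pi^{-(n-1)/2} e^{\tau/(4K)}$ multiplies every remainder estimate. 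This is the source both of the $e^{\tau/(4K)}$ weights and of the $(K/K_\pm)^{(n-1)/2}$ ratios in the stated inequality.

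The four remainder terms are then estimated individually. For the tail integral $\varphi(p)\int_{C_\infty\setminus C_h} e^{\rho\cdot x}dx$, \cref{CGOoverCutParaboloid} gives after division the first RHS term $(1+(\tau h)^{(n-1)/2}) e^{\tau(1/(4K)-h)}$. For the symmetric-difference term comparing the model paraboloid $\{K|x'|^2 < x_n < h\}$ with $\Omega_{b,h}$, the pinch $K_-|x'|^2 \le \omega(x') \le K_+|x'|^2$ from the admissibility assumption places the difference inside $C_-\setminus C_+$, and then \cref{CGOoverSlicedParaboloid} combined with $\gamma(\tau h,(n+1)/2)\le\Gamma((n+1)/2)$ yields the second RHS term. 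For the Hölder remainder integrated over $\Omega_{b,h}$, I would use $|\varphi(x)-\varphi(p)|\le \|\varphi\|_{C^\alpha}|x|^\alpha$ and, noting $w(p)=0$, bound $|w(x)-w(p)|\le C\|w\|_{C^{1,\beta}}|x|^\alpha$ (valid since $C^{1,\beta}\hookrightarrow C^\alpha$ on the bounded set $\Omega_{b,h}$), and then apply \cref{paraboloidUpperBound} with $s=\alpha$ and $K$ replaced by $K_-$ (using the inclusion $\Omega_{b,h}\subset\{K_-|x'|^2<x_n<h\}$) to obtain the third RHS term.

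The boundary integral over $\partial\Omega_{b,h}\setminus\partial\Omega$ requires one preliminary geometric observation: the lateral piece $\{|x'|=b\}$ contributes nothing, because at $|x'|=b=\sqrt{M}/K$ one has $\omega(b)\ge K_-b^2 = K_-M/K^2\ge h$ thanks to $K_-/K\ge 1/M$ and $h=1/K$, so $\{|x'|=b,\ \omega(x')<x_n<h\}$ is empty. Hence only the flat cap $V=\overline{\Omega_{b,h}}\cap\{x_n=h\}$ contributes, with $(n-1)$-volume at most a constant times $(h/K_-)^{(n-1)/2}$. On this cap $|x|\le h+b\le Ch$, so Taylor's theorem at the origin (using $w(0)=0$ and $\nabla w(0)=0$, the latter following from $w\equiv 0$ on the tangential $(n-1)$-dimensional patch of $\partial\Omega$ together with $\partial_\nu w(0)=0$) yields $|w|\le C\|w\|_{C^{1,\beta}}h^{1+\beta}$ and $|\nabla w|\le C\|w\|_{C^{1,\beta}}h^\beta$. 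Combining these with $|e^{\rho\cdot x}|=e^{-\tau h}$ and $|\partial_\nu e^{\rho\cdot x}|\le\tau e^{-\tau h}$ on the cap, and multiplying by the reciprocal of the main integral, produces the fourth RHS term.

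I expect the main obstacle to be the geometric bookkeeping in the symmetric-difference step: one must verify that the cut-off $|x'|<b$ in the definition of $\Omega_{b,h}$ is automatically inherited when comparing with the paraboloidal models $C_\pm$, which relies precisely on the admissibility constraint $K_-/K\ge 1/M$ together with $h=1/K$, $b=\sqrt{M}/K$. A secondary technical point is the careful tracking of $\tau$-powers and $\pi$-factors when dividing through by the main integral; these can be absorbed into the prefactor $C_{n,\alpha}$ but must be checked in order to arrive at the exponents stated in the final inequality.
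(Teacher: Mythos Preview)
Your proposal is correct and follows essentially the same route as the paper: the identity from \cref{integralSplit1}, the exact evaluation of the main integral via \cref{CGOoverParabola}, and the estimates of the four remainder pieces via \cref{CGOoverCutParaboloid}, \cref{CGOoverSlicedParaboloid}, and \cref{paraboloidUpperBound}, followed by division by the main integral (equivalently, multiplication by $K^{(n-1)/2}\tau^{(n+1)/2}e^{\tau/(4K)}$). The only minor difference is in the cap estimate for $I_4$: the paper bounds $w$ and $\partial_n w$ on $\{x_n=h\}$ by integrating vertically from the graph $x_n=\omega(x')$ (using $\partial_\nu w=0$ along the whole graph), whereas you use a Taylor expansion at the origin (using only $\nabla w(0)=0$); both arguments are valid and produce the same bounds $|w|\lesssim \|w\|_{C^{1,\beta}}h^{1+\beta}$ and $|\partial_n w|\lesssim \|w\|_{C^{1,\beta}}h^{\beta}$.
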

\begin{proof}
In what follows, we make use of the coordinates $x$ in \cref{geomDef}, and hence $p$ is represented by $x=0$. First, by \cref{integralSplit1}, we have
\begin{equation}\label{eq:ii1}
  \begin{split}
    &\varphi(0) \int_{x_n>K\abs{x'}^2} e^{\rho\cdot x} dx = \varphi(0)
    \int_{x_n>\max(h,K\abs{x'}^2)} e^{\rho\cdot x} dx \\&\qquad +
    \varphi(0) \Bigg( \int_{K\abs{x'}^2<x_n<h} e^{\rho\cdot x} dx -
    \int_{\Omega_{b,h}} e^{\rho\cdot x} dx \Bigg) \\ &\qquad -
    \int_{\Omega_{b,h}} e^{\rho\cdot x} \big( \varphi(x)-\varphi(0) -
    k^2 \big(w(x)-w(0)\big) \big) dx \\ &\qquad +
    \int_{\partial\Omega_{b,h} \setminus \partial\Omega} \big(
    e^{\rho\cdot x} \partial_\nu w - w \partial_\nu e^{\rho\cdot x}
    \big) d\sigma\\
    &=\varphi(0)\cdot I_1+\varphi(0)\cdot I_2+I_3+I_4,
  \end{split}
  \end{equation}
  where
  \begin{align}
  I_1:=&  \int_{x_n>\max(h,K\abs{x'}^2)} e^{\rho\cdot x} dx,\label{eq:i1}\\
  I_2:=& \int_{K\abs{x'}^2<x_n<h} e^{\rho\cdot x} dx -
    \int_{\Omega_{b,h}} e^{\rho\cdot x} dx,\label{eq:i2}\\
  I_3:=& -
    \int_{\Omega_{b,h}} e^{\rho\cdot x} \big( \varphi(x)-\varphi(0) -
    k^2 \big(w(x)-w(0)\big) \big) dx,\label{eq:i3}\\
    I_4:= & \int_{\partial\Omega_{b,h} \setminus \partial\Omega} \big(
    e^{\rho\cdot x} \partial_\nu w - w \partial_\nu e^{\rho\cdot x}
    \big) d\sigma. \label{eq:i4}
  \end{align}
  With the help of
  \crefrange{CGOoverParabola}{CGOoverSlicedParaboloid}, we next
  estimate the terms $I_j$, $j=1,\ldots,4$.

  First of all \cref{CGOoverParabola} implies that
  \begin{equation} \label{Est1}
    \int_{x_n>K\abs{x'}^2} e^{\rho\cdot x} dx = \left( \frac{\pi}{K}
    \right)^{(n-1)/2} \frac{1}{\tau^{(n+1)/2}} \exp\left(
    -\frac{\tau}{4K} \right), 
  \end{equation}
  which together with \cref{CGOoverCutParaboloid} gives
  \begin{equation} \label{Est2}
    \abs{I_1}=\abs{\int_{x_n>\max(h,K\abs{x'}^2)} e^{\rho\cdot x} dx}
    \leq \int_{x_n>\max(h,K\abs{x'}^2)} e^{-\tau x_2} dx \leq C_n
    \frac{1 + (\tau h)^{\frac{n-1}{2}}}{\tau^{\frac{n+1}{2}}
      K^{\frac{n-1}{2}}} e^{-\tau h},
  \end{equation}
  where $C_n$ depends only on the dimension $n$.
  
  We proceed with the estimates of the integral terms $I_2$ and $I_3$,
  respectively, in \eqref{eq:i2} and \eqref{eq:i3}. Recall
  \cref{geomDef} and let $K_-$ and $K_+$ be as in \cref{KpKm}
  therein. In the definition, the distances $b,h>0$ were chosen such
  that $h\leq K_-b^2$. Hence the paraboloids $x_n = K_\pm\abs{x'}$ do
  not touch the sides of the cylinder $\{x \mid \abs{x'}<b, -h<x_n<h
  \}$. Set
  \begin{equation}\label{eq:hh1b}
  P_{b,h,\pm} = \{ x\in\R^n \mid K_\pm\abs{x'}^2<x_n<h \}.
  \end{equation}
  According to our discussion above and \cref{KpKm} of \cref{geomDef},
  one can see that $P_{b,h,-} \subset \Omega_{b,h} \subset
  P_{b,h,+}$. Hence, one can show that
  \begin{equation}\label{eq:hh2}
    \begin{split}
      &\abs{\int_{K\abs{x'}^2<x_n<h} e^{\rho\cdot x} dx -
        \int_{\Omega_{b,h}} e^{\rho\cdot x} dx } \leq
      \int_{\{K\abs{x'}^2<x_n<h\} \Delta \Omega_{b,h}} e^{-\tau x_n}
      dx \\ &\qquad \leq \int_{P_{b,h,-} \setminus P_{b,h,+}} e^{-\tau
        x_n} dx
    \end{split}
  \end{equation}
  where and also in what follows, for two sets $A$ and $B$, $A \Delta
  B: = (A\cup B) \setminus (A \cap B)$ signifies the symmetric
  difference of the two sets. Next, by \cref{CGOoverSlicedParaboloid},
  we can further estimate that
  \begin{equation}\label{Est3}
    \int_{P_{b,h,-} \setminus P_{b,h,+}} e^{-\tau x_n} dx = C_n \left(
    K_-^{-\frac{n-1}{2}} - K_+^{-\frac{n-1}{2}} \right)
    \tau^{-\frac{n+1}{2}} \, \gamma\Big(\tau h, \frac{n+1}{2} \Big),
  \end{equation}
  where by the definition in \eqref{eq:gamma}, one clearly has
  \begin{equation}\label{eq:hh3}
    \gamma\left(\tau h, \frac{n+1}{2}\right)\leq
    \Gamma\left(\frac{n+1}{2}\right).
  \end{equation} 
  Finally, by combining \eqref{eq:hh2}, \eqref{Est3} and
  \eqref{eq:hh3}, one has
  \begin{equation}\label{eq:hh4}
    |I_2|\leq C_n \left( K_-^{-\frac{n-1}{2}} - K_+^{-\frac{n-1}{2}}
    \right) \tau^{-\frac{n+1}{2}} \Gamma\left(\frac{n+1}{2}\right)
  \end{equation}

  For the third term $I_3$, we note that $w\in
  C^{1,\beta}$ and so is also in $C^\alpha$. Hence, there holds
  \begin{equation}\label{eq:hh5}
  \abs{\varphi(x) -
    \varphi(0) - k^2 \big(w(x)-w(0)\big)} \leq (
  \norm{\varphi}_{C^\alpha} + k^2 \norm{w}_{C^\alpha})
  \abs{x}^\alpha.
  \end{equation}
  On the other hand, we recall that $\Omega_{b,h} \subset
  P_{b,h,-}$. By applying \cref{paraboloidUpperBound} to estimate the
  integral on the second line below, one can deduce that
  \begin{align}
    &|I_3|=\abs{\int_{\Omega_{b,h}} e^{\rho\cdot x} \big( \varphi(x) -
      \varphi(0) - k^2 \big(w(x)-w(0)\big)\big) dx } \notag \\&\qquad
    \leq \big(\norm{\varphi}_{C^\alpha(\overline{\Omega_{b,h}})} + k^2
    \norm{w}_{C^\alpha(\overline{\Omega_{b,h}})} \big)
    \int_{P_{b,h,-}} e^{-\tau x_2} \abs{x}^\alpha dx \notag \\ &\qquad
    \leq C_{n,\alpha} \big(
    \norm{\varphi}_{C^\alpha(\overline{\Omega_{b,h}})} + k^2
    \norm{w}_{C^{1,\beta}(\overline{\Omega_{b,h}})} \big) \big( h +
    K_-^{-1} \big)^{\alpha/2} h^{(n+\alpha+1)/2}
    K_-^{-(n-1)/2}. \label{Est4}
  \end{align}

  For the last boundary integral term $I_4$ in \eqref{eq:i4}, we first
  note that $V := \partial\Omega_{b,h} \setminus \partial\Omega$ is
  actually a horizontal slice because $h\leq K_- b^2$ and $P_{b,h,-}
  \supset \Omega$. Hence $V = U \times \{h\}$ for some bounded domain
  $U \subset \R^{n-1}$. Its measure is at most the measure of a slice
  of $P_{b,h,-}$, so $\sigma(U) \leq \sigma(\mathbb S^{n-1})
  (h/K_-)^{(n-1)/2}$. On the other hand we know that $w=0$ and
  $\partial_\nu w=0$ on $\partial\Omega$, so $\partial_n w=0$ too, and
  any point of $V$ has a distance at most $h$ from
  $\partial\Omega$. The definition of the H\"older-norm implies that
  \begin{equation}\label{eq:hh8}
    \abs{\partial_n w(x',h)} \leq \norm{w}_{C^{1,\beta}}
  (h-\omega(x'))^\beta,
  \end{equation} 
  where we recall that the graph of the function $\omega$ defines
  $\Omega$ and also that $x'\in U$ with $\omega(x')\leq h$. On the
  other hand the graph stays above the zero line, $\omega(x')\geq0$,
  and hence one obviously has from \eqref{eq:hh8} that
  \begin{equation}\label{eq:hh9}
    \abs{\partial_n w(x',h)} \leq \norm{w}_{C^{1,\beta}} h^\beta.
  \end{equation}
  Next, the fundamental theorem of calculus implies that
  \[
  w(x',h) = \int_{\omega(x')}^h \partial_n w(x',s) ds.
  \]
  Thus, combining with \eqref{eq:hh9} and recalling that
  $0<\omega(x')<h$, one has
  \[
  \abs{w(x',h)} \leq \norm{w}_{C^{1,\beta}} \int_{\omega(x')}^h
  s^\beta ds \leq \norm{w}_{C^{1,\beta}} h^{1+\beta}/(1+\beta),
  \]
  which readily implies with \eqref{eq:hh9} and $\sigma(U)\leq C_n
  (h/K_-)^{(n-1)/2}$ that
  \begin{align}
    &|I_4|=\abs{\int_V \big( e^{\rho\cdot x} \partial_\nu w - w \partial_\nu
      e^{\rho\cdot x} \big) d\sigma} \notag \\ &\qquad \leq e^{-\tau
      h} \int_U \big( \abs{\partial_n w(x',h)} + \tau \abs{w(x',h)}
    \big) dx' \notag \\ &\qquad \leq C_{n,\beta} h^{\beta+(n-1)/2}
    K_-^{-(n-1)/2} (1+\tau h) e^{-\tau h}
    \norm{w}_{C^{1,\beta}(\overline{\Omega_{b,h}})}. 
    \label{Est5}
  \end{align}
  
  Finally, by adding up \eqref{Est1}, \eqref{Est2}, \eqref{Est3},
  \eqref{Est4}, \eqref{Est5} and multiplying both sides by
  $K^{(n-1)/2} \tau^{(n+1)/2} \exp(\tau/4K)$, one can obtain
  \eqref{eq:hh1b}. The proof is complete.
\end{proof}

We need one last lemma before attacking the problem of $K$-curvature
scattering. Namely that non-scattering waves are $C^{1,\beta}$-smooth,
after which we can apply \cref{integralEstimates1}.

\begin{lemma} \label{zeroScatterNormSmooth}
  Let $\Omega\subset\R^n$, $n\geq2$ be a domain of diameter at most
  $D\in\R_+$. Let $u\in H^2_0(\Omega)$ satisfy
  \[
  (\Delta+k^2) u = \varphi
  \]
  for some $\varphi\in L^\infty(\Omega)$ and $k\in\R_+$. Then
  \begin{equation}\label{eq:ff0}
    \norm{u}_{C^{1,\beta}(\overline\Omega)} \leq C
    \norm{\varphi}_{L^\infty(\Omega)}
  \end{equation}
  for any $0\leq\beta<1$ and some finite constant $C=C(D,k,n,\beta)$.
\end{lemma}
\begin{proof}
  We first extend $u$ by zero outside of $\Omega$ into a ball of
  radius $R_{D,k} > D$ such that $k^2$ is a \emph{not} a
  Dirichlet-eigenvalue for $-\Delta$ in balls of radius
  $R_{D,k}$. This is possible because $k=0$ is the only common
  eigenvalue of $-\Delta$ among all large disks. Denote this ball by
  $B_{R,D,k}$. By a bit of abuse of notation, we still denote the
  extended function as $u$. Clearly, $u\in H^2(B_{R,D,k})$ satisfies
  \begin{equation} \label{eq:ff1}
    (\Delta+k^2) u=\chi_\Omega\varphi\quad\mbox{in}\ \ B_{R,D,k},
    \quad u=0\quad\mbox{on}\ \ \partial B_{R,D,k}.
  \end{equation}
  By Corollary~8.7 in \cite{Gilbarg--Trudinger}, we first have from
  \eqref{eq:ff1} that
  \begin{equation} \label{eq:ff3}
    \norm{u}_{H^1(B_{R,D,k})} \leq C(R,D,k)
    \norm{\varphi}_{L^2(\Omega)}.
  \end{equation}
  Then by further applying Corollary~8.35 in
  \cite{Gilbarg--Trudinger}, we have
  \[
  \norm{u}_{C^{1,\beta}(\overline{B_{R,D,k}})} \leq C(R,D,k,n,\beta) (
  \norm{u}_{H^1(B_{R,D,k})} + \norm{\varphi}_{L^\infty(\Omega)} )
  \]
  for any $0\leq\beta<1$, which together with \eqref{eq:ff3} readily
  yields \eqref{eq:ff0}. The proof is complete.
\end{proof}

Next, we proceed to derive a critical inequality with the help of
\cref{integralEstimates1} by properly choosing the parameters
appearing therein.

\begin{proposition} \label{integralFinalEstimate}
  Let $\Omega\subset\R^n$, $n\geq2$ be a bounded domain and $w\in
  H_0^2(\Omega)$ satisfy
  \[
  (\Delta+k^2)w = \varphi
  \]
  for some $\varphi \in L^\infty(\Omega)$ and $k>0$. Let
  $p\in\partial\Omega$ be an admissible $K$-curvature point with
  parameters $L, M, \mu$.
  
  If $\varphi$ restricted to $\overline{\Omega_{b,h}}$ from
  \cref{geomDef} is $C^\alpha$-smooth, $0<\alpha<1$, and $\Omega$ has
  a diameter at most $D$ then
  \begin{equation}\label{eq:kk0}
    \abs{\varphi(p)} \leq \mathcal E \max\big(1,
    \norm{\varphi}_{C^\alpha} \big)\, (\ln K)^{(n+3)/2}
    K^{-\min(\alpha,\mu)/2}
  \end{equation}
  for some $\mathcal E = \mathcal E(\alpha,\mu,n,D,L,M,k) \in\R_+$
  depending only on $\alpha,\mu,n,D,L,M,k$.
\end{proposition}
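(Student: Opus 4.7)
The plan is to specialize \cref{integralEstimates1} to the present setting and then to optimize the free parameters $\tau\in\mathbb{R}_+$ and $\beta\in[0,1)$ appearing on the right-hand side of \eqref{eq:hh1}. First I would invoke \cref{zeroScatterNorm}, which gives $\|w\|_{C^{1,\beta}(\overline{\Omega})}\leq C(n,k,D,\beta)\|\varphi\|_{L^\infty(\Omega)}$ for any $0\leq\beta<1$, so that every occurrence of $\|w\|_{C^{1,\beta}}$ in \eqref{eq:hh1} may be replaced by a constant multiple of $\|\varphi\|_{L^\infty(\Omega)}\leq\|\varphi\|_{C^\alpha(\overline{\Omega_{b,h}})}$. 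After this substitution, the four summands on the right of \eqref{eq:hh1} are controlled by geometric data and the intensity norm alone.

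Next, write $\mu=\min(\alpha,\delta)$ and set
\[
\tau=2\mu K\ln K,\qquad \beta=1-\mu/2\in[0,1).
\]
Recall $h=1/K$ and that \cref{geomDef} yields $K/M\leq K_\pm\leq MK$ together with the oscillation bound $K_+-K_-\leq LK^{1-\delta}$. With these choices one computes $\tau h=2\mu\ln K$, $e^{\tau/(4K)}=K^{\mu/2}$, and $e^{\tau(1/(4K)-h)}=K^{-3\mu/2}$. I would then estimate the four terms in \eqref{eq:hh1} separately. The first is bounded by $C(\ln K)^{(n-1)/2}K^{-3\mu/2}$. The second, after applying the mean-value inequality to $t\mapsto t^{-(n-1)/2}$ together with the oscillation bound, gives $(K/K_-)^{(n-1)/2}-(K/K_+)^{(n-1)/2}\leq CLK^{-\delta}$, hence it is at most $CLK^{-\delta+\mu/2}\leq CK^{-\mu/2}$. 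The third, using $h+K_-^{-1}\leq(1+M)/K$ and $\tau^{3/2}e^{\tau/(4K)}\sim K^{3/2+\mu/2}(\ln K)^{3/2}$, evaluates to $CK^{-\alpha-n/2+1+\mu/2}(\ln K)^{3/2}$, which is at most $CK^{-\mu/2}(\ln K)^{3/2}$ since $n\geq2$ and $\alpha\geq\mu$. The fourth, using $\tau^{(n+1)/2}(1+\tau h)\sim K^{(n+1)/2}(\ln K)^{(n+3)/2}$ and $h^{\beta+(n-1)/2}=K^{-\beta-(n-1)/2}$, evaluates to $CK^{1-\beta-3\mu/2}(\ln K)^{(n+3)/2}$; the specific choice $\beta=1-\mu/2$ turns the $K$-exponent into $-\mu$.

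Summing the four contributions, dividing by $C_{n,\alpha}\sqrt{\pi}$, and absorbing the dependence on $n,\alpha,\delta,L,M,D,k$ into a single constant $\mathcal E$ yields
\[
|\varphi(p)|\leq \mathcal E\,\max\bigl(1,\|\varphi\|_{C^\alpha}\bigr)(\ln K)^{(n+3)/2}K^{-\min(\alpha,\delta)/2},
\]
which is exactly \eqref{eq:kk0}. The regime of small $K$, where $\ln K<1$ or the prescription $\tau=2\mu K\ln K$ fails to be positive, is handled by further enlarging $\mathcal E$ so that the right-hand side trivially exceeds $\|\varphi\|_{L^\infty}\geq|\varphi(p)|$ whenever $K$ is bounded above.

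The main technical obstacle will be the simultaneous balancing of all four terms through a single scale of $\tau$: the factor $e^{\tau/(4K)}=K^{\mu/2}$ \emph{inflates} Terms~2 and~3, while the stronger factor $e^{-3\tau/(4K)}=K^{-3\mu/2}$ in Terms~1 and~4 supplies the needed \emph{decay}, and the scaling $\tau\sim K\ln K$ must be tuned so that the $K^{-\delta}$ gain from the curvature-oscillation bound survives the inflation in Term~2, while the polynomial blow-ups $\tau^{3/2}$ and $\tau^{(n+1)/2}$ in Terms~3 and~4 are damped by the smallness $h=1/K$. The role of choosing $\beta$ close to $1$, specifically $\beta=1-\mu/2$, is precisely to defeat the $K^{1-\beta}$ growth that would otherwise wreck Term~4.
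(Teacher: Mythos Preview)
Your proposal is correct and follows essentially the same route as the paper's own proof: invoke \cref{zeroScatterNorm} to control $\|w\|_{C^{1,\beta}}$, plug into \cref{integralEstimates1}, bound the $K_\pm$-difference via the mean value theorem and the oscillation hypothesis $K_+-K_-\le LK^{1-\delta}$, substitute $h=1/K$, and finally pick $\tau\sim K\ln K$ and $\beta$ close to $1$ to balance the four terms. The paper writes $\tau=4\gamma K\ln K$ with $\gamma=\min(\alpha,\delta)/2$ and takes $\beta=1-\min(\alpha,\delta)$, whereas you take the equivalent $\tau=2\mu K\ln K$ and the slightly larger $\beta=1-\mu/2$; both choices satisfy the needed constraint $\beta>1-3\mu/2$ and lead to the same final exponent $-\mu/2$. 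One small caveat: your remark that the small-$K$ regime is absorbed by enlarging $\mathcal E$ is not literally correct near $K=1$ (the right-hand side of \eqref{eq:kk0} vanishes there), but the paper simply works under the standing hypothesis $K\ge e$, so this is not a genuine gap.
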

\begin{proof}
  First, we have by \cref{zeroScatterNormSmooth} that
  \[
  \norm{w}_{C^{1,\beta}(\overline\Omega)} \leq C_{D,k,n,\beta}
  \norm{\varphi}_{L^\infty(\Omega)}
  \]
  for some finite constant $C_{D,k,n,\beta}$. This gives the required
  function regularity for applying \cref{integralEstimates1}. Hence,
  for any $\tau\in\R_+$, by \cref{integralEstimates1} and assuming
  without loss of generality that $p=0$, we have
  \begin{align}
    &C_{n,k,\alpha} \abs{\varphi(0)} \sqrt{\pi} \leq (1+(\tau
    h)^{(n-1)/2}) e^{\tau (\frac{1}{4K}-h)} + \left(
    \left(\frac{K}{K_-}\right)^{\frac{n-1}{2}} -
    \left(\frac{K}{K_+}\right)^{\frac{n-1}{2}} \right)
    e^{\frac{\tau}{4K}} \notag \\ &\qquad \quad +
    \norm{\varphi}_{C^\alpha} (h+K_-^{-1})^{\alpha/2}
    h^{(n+\alpha+1)/2} (K/K_-)^{(n-1)/2} \tau^{3/2}
    e^{\frac{\tau}{4K}} \notag \\ &\qquad \quad + h^{\beta+(n-1)/2}
    (K/K_-)^{(n-1)/2} (1+\tau h) \tau^{(n+1)/2} e^{\tau (\frac{1}{4K}
      - h)} \norm{w}_{C^{1,\beta}}. \label{startEst}
  \end{align}
  
  Let us start by estimating the difference of powers of $K/K_-$ and
  $K/K_+$. Recall that $1/M \leq K_\pm/K \leq M$ by \cref{KpKm} of
  \cref{geomDef}. Consider the function $f(r) = r^{-s}$ with $f'(r) =
  -sr^{-s-1}$. By the mean value theorem
  \[
  \abs{f(r_-) - f(r_+)} \leq \sup_{r_-<\xi<r_+} \abs{f'(\xi)}
  \abs{r_+-r_-} = C_{s,M} \abs{r_+-r_-}
  \]
  when $1/M \leq r_-,r_+\leq M$. Recall also that $\abs{K_+-K_-} \leq
  L K^{1-\mu}$ by \cref{KpKm}. Hence
  \begin{equation} \label{KpKmDiff}
    \abs{ \left(\frac{K}{K_-}\right)^{\frac{n-1}{2}} -
      \left(\frac{K}{K_+}\right)^{\frac{n-1}{2}} } \leq C_{n,M} \left(
    \frac{K_+}{K} - \frac{K_-}{K} \right) = C_{n,L,M} K^{-\mu}
  \end{equation}
  for some finite constant $C_{n,L,M}$.

  Next, we recall that $h=1/K$ and $b=\sqrt{M}/K$ by \cref{geomDef}, and
  that $K / K_- \leq M$. Applying \eqref{KpKmDiff} to \eqref{startEst},
  estimating the constants and then dividing them to the left-hand
  side give
  \begin{align}
    &C_{n,k,\alpha,L,M} \abs{\varphi(0)} \sqrt{\pi} \leq
    (1+(\tau/K)^{(n-1)/2}) e^{-3\tau/4K} + K^{-\mu} e^{\tau/4K}
    \notag \\ &\qquad \quad + \norm{\varphi}_{C^\alpha}
    K^{-(n+2\alpha+1)/2} \tau^{3/2} e^{\tau/4K} \notag \\ &\qquad
    \quad + K^{-\beta-(n-1)/2} (1+\tau/K) \tau^{(n+1)/2} e^{-3\tau/4K}
    \norm{w}_{C^{1,\beta}}. \label{midEst}
  \end{align}
  Choose $\tau = 4K\ln K^\gamma$ for some $\gamma\in\mathbb{R}_+$ to
  be specified in what follows. Since $K\geq e$, after dividing by the
  constants and $\max(1, \norm{\varphi}_{C^\alpha})$, \eqref{midEst}
  can be further estimated above by
  \begin{equation}\label{eq:kk1}
  (\ln K)^{(n-1)/2} K^{-3\gamma} + K^{\gamma-\mu} + (\ln K)^{3/2}
  K^{1 -n/2 -\alpha +\gamma} + (\ln K)^{(n+3)/2} K^{1-\beta-3\gamma}.
  \end{equation}
  One can directly verify that the quantity in \eqref{eq:kk1} tends to
  zero as $K\to\infty$ if $0<\gamma<\min(\alpha,\mu)$ and $3\gamma
  > 1-\beta$. These two conditions are fulfilled if one chooses
  $\beta=1-\min(\alpha,\mu)$ and $\gamma =
  \min(\alpha,\mu)/2$. For the final form of the upper bound, using
  the fact that $(\ln r)^{a_1} \leq a_1 r^{a_2} /a_2e$ for any
  $a_1,a_2>0$, $r\geq e$, each of the terms in \eqref{eq:kk1} can then
  be estimated above by
  \begin{equation}\label{eq:kk2}
    C_{n,L,M,\mu,\alpha} (\ln K)^{(n+3)/2} K^{-\min(\alpha,\mu)/2}
  \end{equation}
  for some positive constant $C_{n,L,M,\mu,\alpha}$. By combining
  our discussion above, one arrives at \eqref{eq:kk0}. The proof is
  complete.
\end{proof}

\begin{proof}[Proof of \cref{thm:source2}]
  The proof follows from \cref{integralFinalEstimate}. Let
  $\tilde\Omega$ be the interior of the complement of the unbounded
  component of $\R^n\setminus\overline\Omega$, in other words
  $\tilde\Omega$ is $\Omega$ with holes filled up. If $(\Delta+k^2)u =
  \chi_\Omega \varphi$ and $u\in H^2_{loc}(\R^n)$ radiates a zero
  far-field pattern, then by the Rellich lemma $u_{|\tilde\Omega}\in
  H^2_0(\tilde\Omega)$. Moreover $\chi_\Omega\varphi = \varphi \in
  C^\alpha$ in the set $\overline{\Omega_{b,h}}$, where the latter is
  the notation introduced in \cref{geomDef}. Hence, one can readily
  show the claim in the theorem by \cref{integralFinalEstimate}. The
  proof is complete.
\end{proof}

\section{Geometrical characterizations of non-radiating waves and transmission eigenfunctions} \label{sect:waves_eigenfunctions}

In \cref{sect:2}, we considered wave scattering due to an active
source that generates the wave propagation. In this section, we
consider a different scattering scenario where one uses an incident
field to generate the wave propagation in a uniform and homogeneous
space. There is an inhomogeneous medium scatterer located in the
space. The medium scatterer is passive and is characterized by its
index of refraction which is different from that of the ambient
space. The presence of the inhomogeneity interrupts the wave
propagation and produces the wave scattering. Since we shall be
considering scattering at a fixed wavenumber, one can also formulate
such a scattering problem in the context of quantum scattering, where
the refractive index is replaced by a potential function and the
wavenumber is the energy level. However, in order to be more definite
in our description, we stick to the former case of medium scattering
in our subsequent discussion.

We are mainly concerned with the scenario that no wave scattering is
generated; that is, the incident wave passes through the medium
without being interrupted. In such a case, the incident field is
referred to as a non-scattering wave. We aim to geometrically
characterize the non-scattering waves associated with a given medium
scatterer. The critical observation is that the incident wave
interacting with the medium scatterer generates an active source which
connects to our previous study on radiationless sources in
\cref{sect:2}. Nevertheless, due to the interaction of the incident
wave and the medium scatterer, some new physical phenomena
manifest. Mathematically, we also need to introduce technically new
ingredients to deal with the new situation. Furthermore, the study in
\cref{sect:2} enables us to derive an elegant geometric property of
interior transmission eigenfunctions that have some regularity, which
is of independent interest in spectral theory. In what follows, we
first introduce medium scattering and non-scattering waves,
invisibility cloaking and transmission eigenvalue problems. Then we
study the geometrical characterization of non-scattering waves and its
implication to invisibility cloaking. Finally, we derive the new
property of interior transmission eigenfunctions.

\subsection{Wave scattering from an inhomogeneous medium} \label{ITPsetup}

Let $V\in L^\infty(\mathbb{R}^n)$ be a complex-valued function such that $\Im V\geq 0$ and $\mbox{supp} (V)\subset\Omega$. The function $V$ signifies the index of refraction of an inhomogeneous medium supported in $\Omega$. Let $u^i(x)$ be an incident field which is an entire solution to the Helmholtz equation
\begin{equation}\label{eq:et1}
\Delta u^i+k^2 u^i=0\quad\mbox{in}\ \ \mathbb{R}^n. 
\end{equation} 
For specific examples, one can take the incident field to be a plane wave $u^i(x) = \exp(ik\theta\cdot x)$, where $\theta\in\mathbb{S}^{n-1}$ signifies an incident direction, or a Herglotz wave which is the superposition of plane waves of the form
\[
u^i(x)=\int_{\mathbb{S}^{n-1}} g(\theta) \exp(ik\theta\cdot x)\, d\sigma(\theta), \quad g\in L^2(\mathbb{S}^{n-1}).
\]
The presence of the inhomogeneity interrupts the propagation of the incident field. Let $u^s$ signify the perturbation to the incident wave field. It is also called the scattered field. Set $u=u^i+u^s$ to be the total wave field. Medium scattering is governed by the following Helmholtz system 
\begin{equation}\label{eq:helm1}
\begin{split} 
  &\big(\Delta + k^2(1+V)\big) u = 0, \qquad u=u^i +
  u^s,\quad \mbox{in}\ \ \mathbb{R}^n,
  \\ &\lim_{r\to\infty}
  r^\frac{n-1}{2} \big(\partial_r - ik \big) u^s =
  0.
\end{split}
\end{equation}
Here $u\in H_{loc}^2(\mathbb{R}^n)$ and satisfies the following Lippmann-Schwinger equation,
\begin{equation}\label{eq:ls1}
u=u^i-k^2(\Delta+k^2)^{-1}(Vu),
\end{equation}
where the integral operator $(\Delta+k^2)^{-1}$ is defined in \eqref{eq:source2}. Similar to \eqref{eq:sss1}, one can have the far-field pattern of $u^s$ from \eqref{eq:ls1} by the stationary phase approximation,
\begin{equation}\label{eq:fn1}
u_\infty^s(\hat x)=-k^2C_{n,k} \mathcal{F}(Vu)(k\hat x)\in L^2(\mathbb{S}^{n-1}). 
\end{equation}
It is noted that in \eqref{eq:fn1}, $u$ in the right-hand side is the unknown total wave field, which is in sharp difference to \eqref{eq:sss1} for the source scattering and is responsible for the major new technical difficulty of the study in the present section compared to that in \cref{sect:2}. 

Similarly to the scattering by an active source, we are also particularly interested in the case that there is no scattering associated with the configuration consisting of the incident wave $u^i$ and the inhomogeneous medium $(\Omega, V)$. If this occurs, then $u^i$ is referred to as a non-scattering incident field. Suppose that $u_\infty^s\equiv 0$ and by the Rellich lemma, one immediately has that $u^s=0$ in the unbounded component of $\mathbb{R}^n\backslash\overline\Omega$. If $\Omega$ is simply connected, by setting $w=u^i|_{\Omega}$, one can readily verify that there holds
\begin{align}
  &(\Delta+k^2) w = 0\qquad\mbox{in}\ \ \Omega, \label{ITP1}\\ &(\Delta+k^2(1+V))
  u = 0 \qquad\mbox{in}\ \ \Omega ,\label{ITP2}\\ &w,u\in L^2(\Omega), \qquad u-w \in H^2_0(\Omega). \label{ITP3}
\end{align}
It is pointed out that the last condition means that $u=w$ and $\partial_\nu u=\partial_\nu w$ on $\partial\Omega$, which come from the standard transmission condition on the total wave field $u$ in \eqref{eq:helm1} across $\partial\Omega$, along with the fact that $u^s=0$ in $\mathbb{R}^n\backslash\overline\Omega$.

\crefrange{ITP1}{ITP3} are referred to as the \emph{interior transmission problem} in the literature as discussed in the introduction. If for some $k\in\mathbb{R}_+$, there exists a pair of nontrivial solutions to \eqref{ITP1}--\eqref{ITP3}, then $k$ is call a transmission eigenvalue and $w, u$ are said to be the corresponding eigenfunctions. According to our discussion above, we know that if no scattering occurs for the Helmholtz system \eqref{eq:helm1}, i.e. invisibility, then the restrictions of the total wave $u$ and the incident wave $u^i$ form a pair of transmission eigenfunctions with the wavenumber being the transmission eigenvalue. On the other hand, for a pair of transmission eigenfunctions $w$ and $u$, it is easily seen that if $w$ can be (analytically) extended from $\Omega$ to the whole space $\mathbb{R}^n$ as an entire solution to the Helmholtz equation \eqref{eq:et1}, which is still denoted by $w$, then $w$ is a non-scattering incident wave field.

A well-known example is if $\Omega$ is a central ball and $V$ is radially symmetric. Then there exist non-scattering incident waves, which in turn, by our discussion above, implies the existence of transmission eigenfunctions. Because of the radial symmetry they can be analytically extended as entire solutions to the Helmholtz equation. It is widely believed in the literature that in general transmission eigenfunctions cannot be analytically extended to the whole space as an entire solution to the Helmholtz equation associated with a generic $(\Omega, V)$. When none of them can be extended, this means that the inhomogeneous index of refraction scatters nontrivially every incident field.

Per our discussion in \cref{sec:Intro}, transmission eigenfunctions cannot be analytically extended across a corner on $\partial\Omega$. That means, corner singularities scatter every entire incident wave nontrivially \cite{BPS,HSV}. In what follows, we shall first establish a result applicable in many more situations by showing that if there is an admissible $K$-curvature point on $\partial\Omega$, then it must scatter every generic entire incident field nontrivially. Our study follows a similar spirit to that in \cref{sect:2} on the source scattering in a localized and geometrized manner. To that end, we first present some preliminary results on the direct scattering problem \eqref{eq:helm1}.

\begin{definition} \label{directScatter}
  A non-empty subset
  \[
  S \subset \{(k,\Omega,V) \mid k\in\R_+,\, \Omega\subset\R^n \text{ a
    bounded domain},\, V\in L^\infty(\R^n),\, V=0 \text{ in }
  \R^n\setminus\overline\Omega\}
  \]
  is called \emph{a collection of scattering models}. It gives
  \emph{uniformly well-posed scattering problems} if there is a map
  \[
  \mathcal U_S : \{\big(d(\Omega)k, \norm{V}_\infty\big) \mid
  (k,\Omega,V) \in S\} \to \R
  \]
  which is non-decreasing in both of its arguments and such that for
  any incident wave $u^i\in L^2_{loc}$ and any triple $(k,\Omega,V)\in
  S$ we have a unique solution to \eqref{eq:helm1} and the scattered
  wave satisfies
  \[
  \norm{u^s}_{L^2(\Omega)} \leq \mathcal U_S\big( d(\Omega)k,
  \norm{V}_\infty \big) \norm{u^i}_{L^2(\Omega)}.
  \]
\end{definition}

We have chosen this type of condition for the scattered wave because
we are interested in how the shape (height and width) of the potential
affects scattering. Furthermore by dimentional analysis one notices
that the diameter $d(\Omega)$ and wavenumber $k$ have units inverse to
each other, while $\norm{V}_\infty$ is unitless. Hence $d(\Omega)k$
must appear as a combination in any norm estimate which is physically
relevant. We are also using $L^2$-norms for simplicity although more
general norms could apply in certain cases that are out of the scope
of this topic. We will show in \cref{directScatterLowFreq} that there
are collections of scattering models that are physically interesting,
and give uniformly well-posed scattering problems. One such collection
$S$ is defined by $d(\Omega) k \norm{V}_\infty < c_n$ for a given
constant $c_n>0$. In fact, we have

\begin{proposition} \label{directScatterLowFreq}
  For $n\geq2$ there is a constant $c_n\in\R_+$ with the following
  property. Let $k\in\R_+$ and $V\in L^\infty(\R^n)$ with $V=0$
  outside of a bounded domain $\Omega$. if $d(\Omega) k
  \norm{V}_\infty < c_n$ and $u^i$ is an incident wave, then the
  scattering system \eqref{eq:helm1} has a unique solution $u^s$ which
  also satisfies
  \[
  \norm{u^s}_{L^2(\Omega)} \leq \frac{d(\Omega) k \norm{V}_\infty
  }{c_n - d(\Omega) k \norm{V}_\infty} \norm{u^i}_{L^2(\Omega)}.
  \]
  Also
  \[
  \norm{u}_{L^2(\Omega)} \leq \frac{c_n}{c_n - d(\Omega) k
    \norm{V}_\infty} \norm{u^i}_{L^2(\Omega)}.
  \]
  for the total wave.
\end{proposition}
\begin{proof}
  By Section~2 in \cite{BS} (see the last paragraph in the proof of
  \cref{zeroScatterNorm} for more details) we have
  \[
  \norm{(\Delta+k^2)^{-1}f}_{L^2(\Omega)} \leq C_n d(\Omega) k^{-1}
  \norm{f}_{L^2(\Omega)}
  \]
  for any $f\in L^2(\R^n)$ with $f=0$ outside $\Omega$. Recall the
  Lippman--Schwinger equation \eqref{eq:ls1}, $u = u^i - k^2
  (\Delta+k^2)^{-1} (V u)$. By the estimate right above we have
  \begin{equation} \label{eq:LSestimate}
    \norm{-k^2(\Delta+k^2)^{-1} (Vu)}_{L^2(\Omega)} \leq C_n
    d(\Omega)k \norm{V}_{L^\infty(\Omega)} \norm{u}_{L^2(\Omega)}
  \end{equation}
  because $V(x)=0$ outside $\Omega$.

  The Neumann series for the Lippman--Schwinger equation converges in
  $L^2(\Omega)$ if the operator norm above is less than $1$, i.e. if
  $C_n d(\Omega) k \norm{V}_\infty < 1$. But this is one of our
  assumptions. Set
  \begin{equation} \label{eq:uSeries}
    u = \sum_{j=0}^\infty \big( -k^2 (\Delta+k^2)^{-1} V \cdot \big)^j
    u_i,
  \end{equation}
  and we have convergence and norm estimate
  \begin{equation} \label{eq:uEstimate}
    \norm{u}_{L^2(\Omega)} \leq \frac{1}{1-C_n d(\Omega) k
      \norm{V}_\infty} \norm{u^i}_{L^2(\Omega)}.
  \end{equation}
  The claim follows by applying \eqref{eq:LSestimate} to $u^s =
  -k^2(\Delta+k^2)^{-1} V u$.
\end{proof}

\subsection{Geometric characterizations of non-scattering incident fields and transmission eigenfunctions}

We are now in a position to geometrically characterize non-scattering configurations associated with the medium scattering system \eqref{eq:helm1} and the interior transmission eigenfunctions. First, we show that a generic inhomogeneous medium with a sufficiently small support, compared to the underlying wavelength and the refractive index, then it must scatter any generic incident field nontrivially. In the following, we use the tilde notation in $\tilde C^{1/2}$ to denote
  dimensionless norms. In particular
  \begin{equation} \label{eq:tildeHolder}
    \norm{f}_{\tilde C^{1/2}(\Omega)} = \norm{f}_{L^\infty(\Omega)} +
    k^{-1/2} \sup_{x,y\in\Omega}
    \frac{\abs{f(x)-f(y)}}{\abs{x-y}^{1/2}}.
  \end{equation}

\begin{theorem}\label{scatterThm}
  Let $n\in\{2,3\}$. Then there is a constant $C_n\in\R_+$ with the
  following property. Let $S$ be a collection of scattering models
  that gives uniformly well-posed scattering problems. Assume
  furthermore that $\norm{V_{|\Omega}}_{\tilde C^{1/2}} \leq M$ and
  $d(\Omega)k\leq \delta_M$ for all $(k,\Omega,V)\in S$. Next, let
  $(k,\Omega,V)\in S$ with $\Omega$ Lipschtz and $\R^n\setminus\Omega$
  connected.

  If $u^i\in L^2_{loc}$ is an incident wave such that
  \begin{equation}\label{eq:newn1}
  \sup_{p\in\partial\Omega} \abs{\frac{V(p)}{\norm{V}_{\tilde
        C^{1/2}}} \frac{u^i(p)}{\norm{u^i}_{\tilde C^{1/2}}}} > C_n
  \sqrt{d(\Omega)k} \big( 1 + (1+M)(1+\delta_M)(1+\mathcal
  U_S(\delta_M,M)) \delta_M^{n/2} \big),
  \end{equation}
  then the far field $u^s_\infty$ does not vanish identically.
\end{theorem}
\begin{proof}
  We proceed by a reductio ad absurdum. Assume contrarily that
  $u^s_\infty\equiv0$, and then by the Rellich lemma we see that
  $u^s=0$ in $\mathbb{R}^n\setminus\overline{\Omega}$. We are going to
  follow the ideas of the proof of \cref{thm:source1b}. For this, note
  that $(\Delta+k^2)u^s = -k^2 Vu$ and so we define the source term $f
  = -k^2Vu$. Set $g = f_{|\Omega} - k^2 u^s_{|\Omega}$ so that $\Delta
  u^s = g$ and $g = -k^2 V u^i$ on $\partial\Omega$ when $u^s=0$
  outside of $\Omega$.

  Integrating by parts, we note that $\int_\Omega g(x) dx = 0$, which further
  implies that
  \begin{align}
    k^2\abs{V(p) u^i(p)} &= \abs{g(p)} \leq \frac{1}{m(\Omega)} \abs{
      \int_\Omega \big( g(p) - g(x) \big) dx } \notag \\ &\leq
    [g]_{1/2} \frac{1}{m(\Omega)} \int_\Omega \abs{p-x}^{1/2} dx
    \notag \\ &\leq [g]_{1/2} \sqrt{d(\Omega)} \label{est:gScatter}
  \end{align}
  for any $p\in\partial\Omega$. Let us estimate $[g]_{1/2}$ next. Note
  that the seminorm $[g]_{1/2}$ is not multiplicative, but the norm
  $\norm{g}_{\tilde C^{1/2}}$ is. Hence, we can switch to that norm first:
  \begin{align}
    [g]_{1/2} &= k^{1/2} k^{-1/2}[g]_{1/2} \leq k^{1/2}
    \norm{g}_{\tilde C^{1/2}} \notag \\ &\leq k^{1/2} \norm{f}_{\tilde
      C^{1/2}} + k^{5/2} \norm{u^s}_{\tilde C^{1/2}} \notag \\ &\leq
    k^{5/2} \norm{V}_{\tilde C^{1/2}} \norm{u^i}_{\tilde C^{1/2}} +
    k^{5/2} (1+M) \norm{u^s}_{\tilde C^{1/2}}. \label{est:gToUs}
  \end{align}

  By \cref{zeroScatterNorm} and the trivial estimates
  $d(\Omega)k\leq\delta_M$, $\norm{V}_\infty \leq \norm{V}_{\tilde
    C^{1/2}}$ we have
  \[
  \norm{u^s}_{\tilde C^{1/2}} \leq C_n k^{n/2} (1+\delta_M)
  \norm{V}_{\tilde C^{1/2}} \norm{u}_{L^2(\Omega)}.
  \]
  By the uniform well-posedness and $\norm{u^i}_2 \leq \sqrt{m(\Omega)}
  \norm{u^i}_\infty$, we also have
  \begin{align*}
    \norm{u}_{L^2(\Omega)} &\leq \norm{u^i}_{L^2(\Omega)} +
    \norm{u^s}_{L^2(\Omega)} \leq \big( 1 + \mathcal
    U_S(d(\Omega)k,\norm{V}_\infty) \big) \norm{u^i}_{L^2(\Omega)}
    \\ &\leq C_n \big(1 + \mathcal U_S(\delta_M,M)\big) \delta_M^{n/2}
    k^{-n/2} \norm{u^i}_{\tilde C^{1/2}}.
  \end{align*}
  Estimating $\norm{u^s}_{\tilde C^{1/2}}$ in \eqref{est:gToUs} using
  the two previous estimates gives
  \begin{align}
    [g]_{1/2} &\leq k^{5/2} \norm{V}_{\tilde C^{1/2}}
    \norm{u^i}_{\tilde C^{1/2}} \notag\\ &\phantom{\leq}+ C_n k^{5/2}
    (1+M) (1+\delta_M) \norm{V}_{\tilde C^{1/2}} \big(1 + \mathcal
    U_S(\delta_M,M)\big) \delta_M^{n/2} \norm{u^i}_{\tilde C^{1/2}}
    \notag \\ &\leq C_n k^{5/2} \norm{V}_{\tilde C^{1/2}}
    \norm{u^i}_{\tilde C^{1/2}} \Big( 1 + (1+M)(1+\delta_M) \big(
    1+\mathcal U_S(\delta_M,M)\big) \delta_M^{n/2}
    \Big). \label{est:gFinal}
  \end{align}

  We recall that the condition of $u^s_\infty$ vanishing identically
  implies \eqref{est:gScatter}, which combined with \eqref{est:gFinal}
  gives
  \[
  \frac{\abs{V(p)u^i(p)}}{\norm{V}_{\tilde C^{1/2}} \norm{u^i}_{\tilde
      C^{1/2}}} \leq C_n \sqrt{d(\Omega)k} \big( 1 +
  (1+M)(1+\delta_M)(1+\mathcal U_S(\delta_M,M)) \delta_M^{n/2} \big).
  \]
  The claim follows by taking the supremum over $p\in\partial\Omega$.
\end{proof}

\begin{remark}
According to \eqref{eq:newn1}, it can be easily inferred that if a generic inhomogeneous index of refraction possesses a sufficiently small support, compared to the underlying wavelength, then it scatters every generic incident wave nontrivially; that is, it cannot be identically invisible under the
  plane wave probing. As a simple illustration, one can consider a constant refractive index and the plane wave incidence of the form $\exp(ik\theta\cdot x)$, $\theta\in\mathbb{S}^{n-1}$. 
\end{remark}

Next, we localize and geometrize the ``smallness'' result of
\cref{scatterThm}.

\begin{theorem}\label{thm:medium2}
  Let $L, \mu, D, M_p, M_i, k\in\R_+$, $M>1$, $0<\alpha<1$, $n\geq2$
  be the a-priori constants. Let $S$ be a collection of scattering
  models with wavenumber $k$ that gives uniformly well-posed
  scattering problems. Then there exists a positive constant
  $\mathcal{C}$ depending only on the above parameters and satisfying
  the below.

  Let $(k,\Omega,V) \in S$ and we furthermore assume the
  following. The set $\Omega$ has diameter at most $D$ and there is
  $p\in\partial\Omega$ which is an admissible $K$-curvature point with
  parameters $L, M, \mu$ and $K\geq e$. Furthermore $p$ is
  connected to infinity through $\R^n\setminus\overline\Omega$.  The
  potential $V$ is of the form $V=\chi_\Omega \varphi$ with
  $\varphi\in C^\alpha(\R^n)$ and
  $\norm{\varphi}_{C^\alpha(\overline{\Omega})} \leq M_p$.

  Given any incident wave $u^i$ satisfying $\lvert{u^i}\rvert \leq 1$
  and $\norm{u^i}_{C^{1,\alpha}(\R^n)}\leq M_i$, if
  \begin{equation} \label{scatterLBhighK}
    \abs{\varphi(p)} \lvert u^i(p)\rvert > \mathcal C (\ln K)^{(n+3)/2}
    K^{-\min(\alpha,\mu)/2}
  \end{equation}
  then $u^s_\infty\neq0$.

  In other words an admissible high-curvature point on
  $\partial\Omega$ scatters every incident field nontrivially
  independent of the other parts of the inhomogeneous index of
  refraction, as long as the curvature is high enough compared to the
  lower bound of the incident wave or potential's amplitude.
\end{theorem}
\begin{proof}
  Assume contrarily that $u^s_\infty\equiv0$. Then by the Rellich
  lemma we know that $u^s=0$ in the unbounded component of
  $\R^n\setminus\overline\Omega$. Let $B$ be a ball of diameter $2D$
  that contains $\overline\Omega$. Then $u^s=\partial_\nu u^s=0$ on
  its boundary.

  We note that $u$ is a solution to the following PDE:
  $(\Delta+k^2(1+V)-c)v = -cu$ in $B$ and $v\in H^1(B_D)$ with $v=u^i$
  on $\partial B$ for any constant $c>0$. By taking $c$ large enough,
  Theorem~8.16 in \cite{Gilbarg--Trudinger} gives the unique
  solvability of the above PDE system, and so $u=v$, and moreover it
  gives the integrability $u \in L^\infty(B)$ with a bound
  $\lVert{u}\rVert_{L^\infty(B)} \leq \lVert{u^i}\rVert_{L^\infty(B)}
  + C \norm{u}_{L^2(B)}$ for some constant $C=C(n,k,D)$. The uniform
  well-posedness of the scattering problem given by
  \cref{directScatter} implies $\norm{u}_{L^2(B)} \leq
  C(D,M_p,k)$. Hence $u$ is bounded in $B$ with a bound depending only
  on the a-priori constants.

  Set $f = -k^2 V u$. Then $\norm{f}_{L^\infty(B)} \leq C(n,k,D,M_p)$
  and
  \[
    (\Delta+k^2)u^s = f, \qquad u^s\in H^2_0(B).
  \]
  \cref{zeroScatterNormSmooth} implies that $u^s\in
  C^\alpha(\overline{B})$ with a norm bounded by the supremum of
  $\abs{f}$, which is bounded by a-priori constants according to the
  first part of the proof. Since $u=u^i+u^s$ and $u^i$ is
  H\"older-continuous, this further implies that $f\in
  C^\alpha(\overline\Omega)$ with a norm bounded by the a-priori
  constants.

  Consider the domain $\Omega$ now, and let $\tilde\Omega \supset
  \Omega$ be open and simply connected such that
  $\partial\tilde\Omega\subset\partial\Omega$. We have $(\Delta+k^2)
  u^s = f$ and $u^s \in H^2_0(\tilde\Omega)$ because $u^s=0$ in the
  unbounded component of $\R^n\setminus\overline\Omega$. The source
  $f$ is H\"older-continuous in $\Omega$, and hence it is obviously
  H\"older-continuous in $\overline{\tilde\Omega_{b,h}}$
  (cf. \cref{geomDef} for the notation used here). By
  \cref{integralFinalEstimate} and the estimate for $f$ we have
  \begin{equation}\label{eq:jj1}
  \abs{f(p)} \leq C (\ln K)^{(n+3)/2} K^{-\min(\alpha,\mu)/2}
  \end{equation}
  for some finite constant $C$ depending on the a-priori
  parameters. Set $\mathcal C = C/k^2$ and recall that $f=-k^2 V
  (u^i+u^s)$ with $u^s=0$ at $x=p$. Thus we have reached a
  contradiction with \eqref{scatterLBhighK} and so the assumption of
  $u^s_\infty\equiv0$ is false. The proof is complete.
\end{proof}

\begin{corollary}
  Consider the scattering configuration described in
  \cref{thm:medium2} and assume that there is no scattering, namely
  $u_\infty^s\equiv 0$. Then
  \begin{equation}\label{eq:jj2}
    |\varphi(p)||u^i(p)|\leq \psi(K), \quad\psi(K):=C (\ln
    K)^{(n+3)/2} K^{-\min(\alpha,\mu)/2},
  \end{equation}
  where $C$ is a positive constant depending only on the a-priori
  constants. It can be straightforwardly verified that
  $\lim_{K\rightarrow+\infty}\psi(K)=0$.

  Hence, if the medium's refractive index is not vanishing at a
  high-curvature point, then the incident field must be nearly
  vanishing at the high-curvature point.
\end{corollary}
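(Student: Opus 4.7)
The plan is to recognize that this corollary is the logical contrapositive of \cref{thm:medium2}, so essentially no new analytic work is required. I would take the constant $C$ in the corollary to be the same constant $\mathcal{C}$ furnished by \cref{thm:medium2}, and set $\psi(K) := C (\ln K)^{(n+3)/2} K^{-\min(\alpha,\delta)/2}$.

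I would then proceed by reductio ad absurdum. Under the standing hypothesis $u^s_\infty \equiv 0$, suppose for contradiction that $|\varphi(p)||u^i(p)| > \psi(K)$. All the a-priori assumptions required by \cref{thm:medium2} are inherited verbatim from the corollary's hypotheses (the H\"older control of $\varphi$ and $u^i$ by the bounds $M_p,M_i$, the admissibility of $p$ as a $K$-curvature point with the same parameters $L,M,\delta$, connectedness of $p$ to infinity through $\R^n\setminus\overline{\Omega}$, and the uniform well-posedness of \eqref{eq:helm1} in the sense of \cref{directScatter}), so the theorem applies directly and yields $u^s_\infty \not\equiv 0$, contradicting the hypothesis. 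This immediately gives the desired bound $|\varphi(p)||u^i(p)| \leq \psi(K)$.

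It remains to verify the asymptotic claim $\lim_{K\to\infty} \psi(K) = 0$. This is elementary: since $\alpha,\delta>0$ one has $\min(\alpha,\delta)/2 > 0$, and the standard estimate $(\ln K)^a \leq (a/(be)) K^b$ valid for $K \geq e$ and any $a,b>0$ (the very inequality invoked at the close of the proof of \cref{integralFinalEstimate}) allows one to pick any $b \in (0, \min(\alpha,\delta)/2)$ and conclude $\psi(K) \leq C' K^{b - \min(\alpha,\delta)/2} \to 0$. The final qualitative statement --- that $|u^i(p)|$ must be nearly vanishing at any high-curvature boundary point where the refractive index $\varphi(p)$ is nonzero --- then follows by dividing through by $|\varphi(p)|$. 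There is no substantive obstacle here: the corollary is simply the theorem's contrapositive combined with an elementary decay estimate, and no additional constraint on $K$ beyond the $K \geq e$ already imposed by \cref{thm:medium2} is needed.
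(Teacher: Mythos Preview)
Your proposal is correct and matches the paper's treatment: the corollary is stated without proof in the paper, being an immediate contrapositive of \cref{thm:medium2}, and your verification that $\psi(K)\to0$ via the same elementary inequality used in \cref{integralFinalEstimate} is exactly the intended routine check.
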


The rest of the subsection is devoted to the study of the geometric structures of the interior transmission eigenfunctions in \cref{ITP1,ITP2,ITP3}. Before that, we would like to point out that if $(w,u)$ is a pair of transmission eigenfunctions associated with the eigenvalue $k$, then $(\alpha w,\alpha u)$, $\alpha\in\mathbb{C}\backslash\{0\}$, is obviously a pair of transmission eigenfunctions associated with $k$ as well. Hence, in what follows, we shall always normalize the transmission eigenfunctions in our study. 

\begin{theorem} \label{ITPthm}
  Let $n\in\{2,3\}$ and $k\in\R_+$. Let $\Omega\subset\R^n$ be a
  bounded Lipschitz domain of diameter $\delta k^{-1}$, and $V\in
  \tilde C^{1/2}(\overline\Omega)$ with
  $\inf_{\partial\Omega}\abs{V}>0$. Suppose that $k$ is an interior
  transmission eigenvalue and $u, w\in L^2(\Omega)$ is a pair of
  transmission eigenfunctions associated with $k$. If $u\in \tilde
  C^{1/2}(\overline\Omega)$ with $\norm{u}_{\tilde
    C^{1/2}(\overline\Omega)} = 1$, then there holds
  \begin{equation} \label{ITP}
    \sup_{\partial\Omega} \abs{u} \leq C \frac{\norm{V}_{\tilde
        C^{1/2}}}{\inf_{\partial\Omega}\abs{V}}
    \big((1+\delta)\delta^{n/2}+1\big) \delta^{1/2}
  \end{equation}
  where $C\in\R_+$ is a universal constant independent of any other
  quantities here.
\end{theorem}
\begin{proof}
  Let $f = -k^2 V u$ and extend both $f$ and $(u-w)$ by zero to
  $\R^n\setminus\overline\Omega$. Then $(\Delta+k^2)(u-w) =
  \chi_\Omega f$ in $\R^n$, $u-w$ is trivially an outgoing
  solution. Furthermore $u-w\in H^2_{loc}$ and $\norm{f}_{\tilde
    C^{1/2}} \leq k^2 \norm{V}_{\tilde C^{1/2}}$ because
  $\norm{u}_{\tilde C^{1/2}} \leq 1$. Because $(u-w)_\infty \equiv 0$,
  \cref{thm:source1b} implies that
  \[
  \sup_{\partial\Omega} \abs{f} \leq C k^2 \norm{V}_{\tilde C^{1/2}}
  \big((1+\delta)\delta^{n/2}+1\big) \delta^{1/2}
  \]
  for some universal constant $C\in\R_+$. The claim follows after
  dividing by $k^2 \inf_{\partial\Omega}\abs{V}$.
\end{proof}

\Cref{ITP} establishes the relationship among the boundary values of transmission eigenfunctions that obey certain regularity conditions, the diameter of the domain and the underlying refractive index. It indicates that if the domain is sufficiently small compared to the wavelength, then the transmission eigenfunction is nearly vanishing.

The following theorem localizes and geometrizes the ``smallness'' result of \cref{ITPthm}. 

\begin{theorem}\label{thm:ITP2}
  Let $L, \mu, D, k\in\R_+$, $M>1$, $0<\alpha<1$, $n\geq2$ be the
  a-priori constants. Let $\Omega\subset\R^n$ be a bounded domain
  which has a diameter at most $D$. Assume that $p\in\partial\Omega$
  is an admissible $K$-curvature point with parameters $L,M,\mu$
  and $K\geq e$, and let $V\in C^\alpha(\overline\Omega)$. Suppose
  that $k$ is an interior transmission eigenvalue and $u, w\in
  L^2(\Omega)$ is a pair of transmission eigenfunctions associated
  with $k$. If $u\in C^\alpha(\overline{\Omega_{b,h}})$ with
  $\norm{u}_{C^\alpha(\overline{\Omega_{b,h}})}=1$, where
  $\Omega_{b,h}$ is defined in \cref{geomDef} associated with the
  point $p$, then there holds
  \begin{equation} \label{ITPlocal}
    \lvert u(p)\rvert \leq \mathcal C (\ln K)^{(n+3)/2}
    K^{-\min(\alpha,\mu)/2} \norm{V}_{C^\alpha} / \abs{V(p)}
  \end{equation}
  where $\mathcal{C}$ is a positive constant depending only on the
  a-priori constants.
\end{theorem}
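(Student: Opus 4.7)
The plan is to reduce \cref{thm:ITP2} to \cref{integralFinalEstimate} in exactly the same way that \cref{ITPthm} is reduced to \cref{lipschitzThm}: the global ``smallness'' factor $(\operatorname{diam}\Omega)^\alpha$ of the preceding theorem is simply replaced by the local curvature factor $(\ln K)^{(n+3)/2}K^{-\min(\alpha,\delta)/2}$ supplied by \cref{integralFinalEstimate}.

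First I would set $v:=u-w$. By the transmission eigenfunction conditions \eqref{ITP1}--\eqref{ITP3} one has $v\in H^2_0(\Omega)$, and subtracting the two Helmholtz equations gives
\[
(\Delta+k^2)v \;=\; -k^2 V u \;=:\; \varphi \qquad \text{in } \Omega.
\]
Since $V\in C^\alpha(\overline\Omega)$ and $u\in C^\alpha(\overline{\Omega_{b,h}})$ with $\norm{u}_{C^\alpha(\overline{\Omega_{b,h}})}=1$, the standard Leibniz estimate gives $\varphi\in C^\alpha(\overline{\Omega_{b,h}})$ together with $\norm{\varphi}_{C^\alpha(\overline{\Omega_{b,h}})} \leq k^2 \norm{V}_{C^\alpha(\overline\Omega)}$. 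Thus all the hypotheses of \cref{integralFinalEstimate} are met for the pair $(v,\varphi)$ at the admissible $K$-curvature point $p$.

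Applying that proposition then yields
\[
|\varphi(p)| \;\leq\; \mathcal{E}\max\bigl(1,\norm{\varphi}_{C^\alpha}\bigr)(\ln K)^{(n+3)/2}K^{-\min(\alpha,\delta)/2}.
\]
Since $\varphi(p)=-k^2 V(p)u(p)$ and $\norm{\varphi}_{C^\alpha}\leq k^2\norm{V}_{C^\alpha}$, dividing by $k^2|V(p)|$ (the case $V(p)=0$ being trivial, as the right-hand side of \eqref{ITPlocal} is then infinite) produces \eqref{ITPlocal} after absorbing $k$ and other a-priori constants into $\mathcal C$.

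The one point which requires care, and which I expect to be the main technical nuisance, is reconciling the $\max(1,\norm{\varphi}_{C^\alpha})$ in \cref{integralFinalEstimate} with the strictly \emph{linear} dependence on $\norm{V}_{C^\alpha}/|V(p)|$ demanded in \eqref{ITPlocal}. Two routes are available: either one inspects the proof of \cref{integralFinalEstimate} and observes that the internal estimates are in fact linear in $\norm{\varphi}_{C^\alpha}+\norm{w}_{C^{1,\beta}}$, with $\norm{w}_{C^{1,\beta}}\leq C\norm{\varphi}_{L^\infty}$ by \cref{zeroScatterNorm}, so that the $\max$ is merely a notational packaging; or one performs a trivial rescaling of the eigenpair $(u,w)$ to normalize $k^2\norm{V}_{C^\alpha}=1$, applies the proposition, and undoes the scaling by linearity. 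Beyond this packaging step the argument is a direct transcription of the proof of \cref{ITPthm} into the local curvature setting, with Proposition \ref{lipschitzThm} swapped for Proposition \ref{integralFinalEstimate}.
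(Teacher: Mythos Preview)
Your proposal is correct and follows essentially the same approach as the paper's proof: set $f=-k^2Vu$, observe $(\Delta+k^2)(u-w)=f$ with $u-w\in H^2_0(\Omega)$, apply \cref{integralFinalEstimate}, and divide by $k^2\lvert V(p)\rvert$. Your discussion of the $\max(1,\norm{\varphi}_{C^\alpha})$ packaging is in fact more careful than the paper's own proof, which silently replaces $\max(1,\norm{f}_{C^\alpha})$ by $\norm{V}_{C^\alpha}$ with constants absorbed; your first suggested route (the underlying estimate in \cref{integralEstimates1} is genuinely linear in $\norm{\varphi}_{C^\alpha}+\norm{w}_{C^{1,\beta}}$) is the correct justification.
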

\begin{proof}
  Set $f = -k^2 V u$ and note that
  $\norm{f}_{C^\alpha(\overline\Omega)} \leq k^2
  \norm{V}_{C^\alpha(\overline\Omega)}
  \norm{u}_{C^\alpha(\overline\Omega)}$. Then $(\Delta+k^2)(u-w) = f$,
  $u-w \in H^2_0(\Omega)$. \Cref{integralFinalEstimate} immediately
  yields that
  \[
  \abs{f(p)} \leq C \norm{V}_{C^\alpha} (\ln K)^{(n+3)/2}
  K^{-\min(\alpha,\mu)/2},
  \] 
  for some constant $C$ depending on the a-priori parameters. The
  claim follows after dividing by $k^2\abs{V(p)}$.
\end{proof}

\subsection{Implications to invisibility cloaking}

Finally, we discuss briefly some interesting implications of our
results established in the present section to invisibility
cloaking. Per our discussion in the introduction, a cloaking device is
a type of stealth technology that makes an object invisible with
respect to certain wave measurements. To ease our discussion, let us
consider the probing/incident fields to be plane waves which are
nonvanishing everywhere in the space and have modulus $1$. By the
``local'' result in \cref{thm:medium2}, one concludes that the shape
of a cloaking device cannot be curved severely since the
high-curvature part will cause scattering, which in turn can make the
device more ``visible". Moreover, in our earlier work \cite{BL2016} it
is shown that corner singularities on the support of a scatterer also
cause scattering. These results suggest that a practical cloaking
device should possess a smooth and round shape.

On the other hand, if an object possesses a corner part or a
highly-curved part, does it mean that it is easier for detecting? The
answer is yes. Indeed, the geometric structure of the transmission
eigenfunctions derived in \cref{thm:ITP2} can fulfil this detecting
purpose. In fact, there is algorithmic development in \cite{LLL17} on
the construction of the interior transmission eigenfunctions
associated with an inhomogeneous medium through the corresponding
far-field patterns. Hence, with the measurement of the far-field data,
one can first derive the corresponding interior transmission
eigenfunctions, then the highly-curved part of the a scatterer can be
detected as the place where the transmission eigenfunction is nearly
vanishing according to \cref{thm:ITP2}. Indeed, this is the core of
the detecting algorithm proposed in \cite{LLL17} where it made use of
the geometric structure of transmission eigenfunctions near corners
derived in our work \cite{BL2017b}. Clearly, with the novel geometric
property derived in \cref{thm:ITP2}, the method in \cite{LLL17} can be
equally extended to detecting the highly-curved part of an
inhomogeneous scatterer.

\section{Uniqueness results for inverse scattering problems}

In this section, we consider the application of the results
established so far in the current article to the inverse scattering
problem. The inverse problem associated with the source scattering
system \eqref{sourceScattering} can be described as identifying
$(\Omega, \varphi)$ by knowledge of the corresponding far-field
pattern $u_\infty(\hat x)$.  By introducing an abstract operator
$\mathscr{S}$ defined via \eqref{sourceScattering} that sends the
scatterer $(\Omega, \varphi)$ to the corresponding far-field pattern,
the inverse problem can formulated as the following operator equation,
\begin{equation}\label{eq:ipn2}
\mathscr{S}(\Omega, \varphi)=u_\infty(\hat x), \quad \hat x\in\mathbb{S}^{n-1}. 
\end{equation}
As discussed in the introduction, we are mainly concerned with the
recovery of $\Omega$, independent of the source density $\varphi$. It
can be directly verified that in such a case, the inverse scattering
problem \eqref{eq:ipn2} is nonlinear, and moreover it is formally
determined. We are mainly concerned with the uniqueness issue. That
is, the sufficient condition to guarantee that for two scatterers
$(\Omega, \varphi)$ and $(\Omega', \varphi')$, $\mathscr{S}(\Omega,
\varphi)=\mathscr{S}(\Omega', \varphi')$ if and only if
$\Omega=\Omega'$, without knowing $\varphi$ and $\varphi'$. In what
follows, we shall establish such uniqueness results in two scenarios
of practical importance, as long as $\varphi$ and $\varphi'$ are from
certain generic classes.

The following theorem implies that if two radiating sources defined on
$\Omega$ and $\Omega'$ produce the same far-field pattern, then under
some a-priori conditions on the sources' H\"older-norm, $\Omega$
cannot have a very small component that is not intersecting
$\Omega'$. In essence, the appearance of small radiating source
separated from the main body will always be detected, even if the
shape or source amplitude of the main body is permuted locally at the
same time to try to negate the field added by the small source.
\begin{theorem} \label{determinationThm}
  Let $n\in\{2,3\}$, $0<\alpha\leq 1/2$, $k\in\R_+$ and $\varphi\in
  C^\alpha(\overline{\Omega})$, $,\varphi'\in
  C^\alpha(\overline{\Omega'})$ with $\Omega,\Omega'\subset\R^n$
  bounded Lipschitz domains. Assume that the complements of these sets
  are connected sets. Let $u,u'\in H^2_{loc}$ be outgoing solutions to
  the source problems
  \begin{align*}
    (\Delta+k^2)u&=\chi_\Omega \varphi,\\
    (\Delta+k^2)u'&=\chi_{\Omega'} \varphi'.
  \end{align*}

  If $u_\infty = u'_\infty$ then $\Omega$ cannot have a component
  $\Omega_c$ such that all of the following hold: a)
  $\overline{\Omega_c} \cap \overline{\Omega'} = \emptyset$, b) it can
  be joined to infinity through $\R^n \setminus
  \overline{\Omega\cup\Omega'}$, and c)
  \begin{equation} \label{est:componentDetermination}
    \frac{\sup_{\partial\Omega_c}
      \abs{\varphi}}{\sup_{\Omega_c}\abs{\varphi} +
      k^{-\alpha}\left[\varphi\right]_{\alpha,\Omega_c}} > C \big(
    (1+\delta_c) \delta_c^{n/2} + 1\big) \delta_c^\alpha
  \end{equation}
  for the universal constant $C$ of \cref{thm:source1b}. Here $\delta_c
  = d(\Omega_c)k$. A similar claim holds for the components of $\Omega'$.
\end{theorem}
\begin{proof}
  Assume that there would be such a component $\Omega_c\subset\Omega$.
  Then there is a bounded Lipschitz domain $W\subset\R^n$ such that
  $\Omega \cup \Omega' \subset W$, its complement is connected, and
  $\Omega_c$ is also a component of $W$. Set $w=u-u'$. Then
  \[
  (\Delta+k^2)w = \chi_W (\chi_\Omega\varphi-\chi_\Omega'\varphi'),
  \]
  it satisfies the Sommerfeld radiation condition and
  $w_\infty=0$. The source term above is equal to
  $-\chi_{\Omega_c}\varphi$ on $\Omega_c$. By \cref{thm:source1b}
  \[
  \frac{\sup_{\partial\Omega_c}
    \abs{\varphi}}{\sup_{\Omega_c}\abs{\varphi} +
    k^{-\alpha}\left[\varphi\right]_{\alpha,\Omega_c}} \leq C \big(
  (1+\delta_c) \delta_c^{n/2} + 1\big) \delta_c^\alpha
  \]
  where $\delta_c = d(\Omega_c)k$ as in this theorem's statement. But
  this is a contradiction with property c) of $\Omega_c$ whose
  existence was assumed. Hence no such $\Omega_c$ exists.
\end{proof}

\begin{corollary} \label{determinationCor1}
  Under the situation of \cref{determinationThm} let $\delta_0>0$ be
  the smallest positive solution to
  \begin{equation}
    C\big((1+\delta_0)\delta_0^{(n+1)/2} + \delta_0^\alpha\big) =
    \min\left( \frac{ \sup_{\partial\Omega}\abs{\varphi} }{
      \sup_{\Omega}\abs{\varphi} + k^{-\alpha}
          [\varphi]_{\alpha,\Omega}}, \frac{
      \sup_{\partial\Omega'}\abs{\varphi} }{ \sup_{\Omega'}\abs{\varphi}
      + k^{-\alpha} [\varphi]_{\alpha,\Omega'}} \right)
  \end{equation}
  or smaller. If $d(\Omega)k, d(\Omega')k < \delta_0$ and
  $\overline{\Omega}\cap\overline{\Omega'}=\emptyset$ then $u_\infty
  \neq u'_\infty$.
\end{corollary}
\begin{proof}
  The condition $d(\Omega)k < \delta_0$ guarantees that
  \eqref{est:componentDetermination} holds.
\end{proof}

\begin{corollary} \label{determinationCor2}
  Under the situation of \cref{determinationThm}, let $\delta_0$ be as
  defined in \cref{determinationCor1}, and assume further that
  $\Omega$, $\Omega'$ are well-separated collections of small
  scatterers, namely
  \begin{equation} \label{wellSeparated}
    \Omega = \bigcup_{j=1}^M \Omega_j, \qquad \Omega' =
    \bigcup_{l=1}^N \Omega'_l
  \end{equation}
  where $\Omega_j,\Omega'_l$ each have a diameter at most
  $\delta_0k^{-1}$, and $d(\Omega_{j_1}, \Omega_{j_2}),
  d(\Omega'_{l_1}, \Omega'_{l_2}) > 2\delta_0k^{-1}$ for $j_1\neq
  j_2$, $l_1\neq l_2$. Then if $u_\infty = u'_\infty$ we have $M=N$,
  and under a re-indexing $\overline{\Omega_j} \cap
  \overline{\Omega'_j} \neq \emptyset$ for $j=1,\ldots,M$.
\end{corollary}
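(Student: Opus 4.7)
The plan is to apply \cref{determinationThm} component-by-component and then leverage the well-separatedness to produce a bijective pairing between the $\Omega_j$'s and $\Omega'_l$'s. First, note that the assumption $d(\Omega_{j_1},\Omega_{j_2})>2C_1>0$ makes each $\Omega_j$ a connected component of $\Omega$, and likewise for $\Omega'_l$. Fix an index $j$ and suppose, for contradiction, that $\overline{\Omega_j}\cap\overline{\Omega'}=\emptyset$. Choose $\varepsilon>0$ smaller than $d(\Omega_j,\Omega')$ and smaller than $d(\Omega_j,\Omega_k)$ for every $k\neq j$; then the $\varepsilon$-neighbourhood $N$ of $\Omega_j$ is disjoint from $\overline{\Omega\cup\Omega'}\setminus\overline{\Omega_j}$, so $N\setminus\overline{\Omega_j}\subset\R^n\setminus\overline{\Omega\cup\Omega'}$ provides a full neighbourhood of $\partial\Omega_j$ inside the complement of the total scatterer. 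Using the connectedness of the complements of $\Omega$ and of $\Omega'$ individually, together with the well-separatedness of the small pieces, one verifies that this neighbourhood meets the unbounded component of $\R^n\setminus\overline{\Omega\cup\Omega'}$; hence every boundary point of $\Omega_j$ is connected to infinity through $\R^n\setminus\overline{\Omega\cup\Omega'}$ in the sense of \cref{connected2inf}. With $\operatorname{diam}(\Omega_j)\leq C_1$ and $k\leq C_2$, \cref{determinationThm} then forces $u^s_\infty\neq {u'}^s_\infty$ (after, if need be, shrinking $C_1$ infinitesimally to obtain the strict diameter bound used in that theorem), contradicting the hypothesis. Thus $\overline{\Omega_j}\cap\overline{\Omega'_l}\neq\emptyset$ for at least one $l$.

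Next I would argue that this matching is unique from both sides. Suppose $\overline{\Omega_j}$ met two distinct pieces $\overline{\Omega'_{l_1}}$ and $\overline{\Omega'_{l_2}}$; picking $x\in\overline{\Omega_j}\cap\overline{\Omega'_{l_1}}$ and $y\in\overline{\Omega_j}\cap\overline{\Omega'_{l_2}}$ gives $|x-y|\leq\operatorname{diam}(\Omega_j)\leq C_1$, while the separation assumption forces $|x-y|\geq d(\Omega'_{l_1},\Omega'_{l_2})>2C_1$, a contradiction. Hence each $\Omega_j$ meets a unique $\Omega'_{l(j)}$, and by the symmetric argument (apply the same reasoning with the roles of $\Omega,\Omega'$ swapped, using that \cref{determinationThm} is symmetric in the two scatterers once we exchange the labels of $u^s$ and ${u'}^s$) each $\Omega'_l$ meets a unique $\Omega_{j(l)}$. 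The maps $j\mapsto l(j)$ and $l\mapsto j(l)$ are then mutually inverse, so $M=N$, and after re-indexing we obtain $\Omega_j\cap\Omega'_j\neq\emptyset$ in the closure sense stated, which is consistent with \cref{determinationThm}.

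The main obstacle, in my view, is the topological claim embedded in the first step: that a boundary point of the stranded component $\Omega_j$ can actually be joined to infinity through $\R^n\setminus\overline{\Omega\cup\Omega'}$. Connectedness of $\R^n\setminus\overline{\Omega}$ and of $\R^n\setminus\overline{\Omega'}$ separately does not, in general, imply connectedness of $\R^n\setminus\overline{\Omega\cup\Omega'}$, so one must use the quantitative well-separatedness. A clean way to do this is to observe that each $\Omega_k$ with $k\neq j$ and each $\Omega'_l$ sits inside a ball of diameter at most $C_1$, that these balls are pairwise spaced by more than $2C_1$, and hence the finitely many small obstacles $\{\Omega_k\}_{k\neq j}\cup\{\Omega'_l\}_l$ cannot form a closed shell around $\Omega_j$; a generic ray from $\Omega_j$, slightly perturbed to dodge each small piece, then exits to infinity while remaining in $\R^n\setminus\overline{\Omega\cup\Omega'}$. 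This is the only nontrivial geometric ingredient; once it is in place, the rest of the proof is a straightforward packaging of \cref{determinationThm} with a pigeon-hole argument on closure intersections.
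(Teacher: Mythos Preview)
Your argument is correct and follows the same route as the paper's own proof: show that a stranded component would contradict \cref{determinationThm}, then use the diameter/separation bounds to see that each $\Omega_j$ meets at most one $\Omega'_l$, and conclude by pigeonhole/bijection. Your treatment is in fact more careful than the paper's three-line sketch, which does not spell out the connectivity-to-infinity step or the closure-versus-open and strict-versus-nonstrict diameter issues you flag.
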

\begin{proof}
  Assume that $\Omega$ has a component $\Omega_{j_0}$ that does not
  touch\footnote{We say that $A$ and $B$ touch if
    $\overline{A}\cap\overline{B}\neq\emptyset$.} $\Omega'$. The
  smallness from $d(\Omega_j)k<\delta_0$ guarantees the inequality
  \eqref{est:componentDetermination}. It remains to check that
  $\Omega_{j_0}$ can be joined to infinity through
  $\R^n\setminus\overline{\Omega\cup\Omega'}$. But this follows
  because the distance between the various components of $\Omega$ is
  twice their maximal diameter, and the same holds for $\Omega'$: if a
  component $\Omega_j$ touches a component $\Omega'_l$, then they are
  encircled by an annulus of width at least $\delta_0k^{-1}$ which
  doesn't intersect $\Omega\cup\Omega'$. The three conditions in
  \cref{determinationThm} hold, so $u_\infty \neq u'_\infty$. Since
  the components are well-separated and small, a component of
  $\Omega'$ can only touch at most one component of $\Omega$. Thus
  $M=N$.
\end{proof}

\begin{remark}
  \Cref{determinationCor1} basically indicates that if two sources are
  of sufficiently small sizes (might be with different medium
  contents) and produce the same far-field pattern, then they must be
  very close to each other in the sense that their supports must have
  a nonempty intersection.
\end{remark}

\begin{remark}
  On the other hand, \cref{determinationCor2} implies that the exact
  number and approximate locations of well-separated scatterers are
  uniquely determined by a single far-field measurement, a question
  studied in \cite{GHS}. This gives a proof for the numerical results
  in \cite{Griesmaier1,Griesmaier2}.
\end{remark}

\begin{theorem}\label{thm:final}
  Let $L,\mu, R_m, k, M_p, m_p\in\R_+$, $M>1$, $0<\alpha\leq1/2$ be
  the a-priori constants. Let $\Omega,\Omega'\subset B_{R_m} \subset
  \R^n$, $n\in\{2,3\}$ be bounded Lipschitz domains with connected
  complements, and let $\varphi\in C^\alpha(\overline{\Omega})$,
  $\varphi'\in C^\alpha(\overline{\Omega'})$ such that
  \begin{equation}\label{eq:cond1b}
    m_p\leq \inf_{\Omega}\lvert{\varphi}\rvert,
    \inf_{\Omega'}\lvert{\varphi'}\rvert, \qquad
    \norm{\varphi}_{C^\alpha(\overline\Omega)},
    \|\varphi'\|_{C^\alpha(\overline{\Omega'})}\leq M_p.
  \end{equation}
  Let $u,u'\in H^2_{loc}$ be outgoing solutions to
  $(\Delta+k^2)u=\chi_\Omega\varphi$ and
  $(\Delta+k^2)u'=\chi_{\Omega'}\varphi'$, respectively. Let
  $u_\infty$ and $u'_\infty$ signify their far-field patterns.

  Then there exist two positive constants $C_1$ and $C_2$, depending
  only on the a-priori constants such that if $u_\infty = u'_\infty$,
  $k<C_2$, then $\Omega \setminus \Omega'$ cannot have an admissible
  $K$-curvature point $p$ with parameters $L,M,\mu$ and $K>C_1$,
  and satisfying $d(p,\Omega') < \sqrt{1+M}/K$ and connected to
  infinity through $\R^n \setminus \overline{\Omega\cup\Omega'}$.
\end{theorem}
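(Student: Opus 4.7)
The plan is to mirror the contradiction argument used in \cref{determinationThm}, but to invoke the local high-curvature estimate \cref{integralFinalEstimate} in place of the global small-diameter estimate \cref{lipschitzThm}.

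Argue by contradiction: assume that $u^s_\infty = {u'}^s_\infty$ and yet a point $p \in \partial\Omega \setminus \overline{\Omega'}$ with the stated properties exists. Let $u,u'$ denote the total wave fields corresponding to $(\Omega, V)$ and $(\Omega', V')$, respectively, and set $w = u - u'$. Then $w$ radiates with vanishing far-field pattern and satisfies $(\Delta + k^2) w = -k^2(Vu - V' u')$ in $\R^n$. By Rellich's lemma, $w$ vanishes throughout the unbounded component $G$ of $\R^n \setminus \overline{\Omega \cup \Omega'}$. Setting $\tilde\Omega = \R^n \setminus \overline G$, I have $w\big|_{\tilde\Omega} \in H_0^2(\tilde\Omega)$, and since $p$ is connected to infinity through $\R^n \setminus \overline{\Omega \cup \Omega'}$ by hypothesis, $p \in \partial\tilde\Omega$.

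The separation hypothesis $d(p,\Omega') > \sqrt{1+M}/K$ (interpreted so that $\Omega'$ misses the closed ball of radius $\sqrt{b^2 + h^2} = \sqrt{1+M}/K$ about $p$, where $b = \sqrt{M}/K$ and $h = 1/K$ are the parameters of \cref{geomDef}) guarantees that the boundary neighborhood $\Omega_{b,h}$ associated to $\partial\Omega$ at $p$ is disjoint from $\overline{\Omega'}$. Consequently $\partial\tilde\Omega$ agrees with $\partial\Omega$ inside this neighborhood, so $p$ is an admissible $K$-curvature point of $\tilde\Omega$ with the same parameters $L, M, \delta$, and inside $\Omega_{b,h}$ one has $V' \equiv 0$ and $V = \varphi$, giving
\[
(\Delta + k^2) w = -k^2 \varphi u \quad \text{in } \Omega_{b,h}.
\]
Elliptic regularity for the direct problem combined with \cref{directScatterLowFreq} and the assumptions that $\varphi \in C^\alpha(\overline\Omega)$ and $u^i \in C^{1,\alpha}(\R^n)$ yields that the source $-k^2\varphi u$ is H\"older continuous on $\overline{\Omega_{b,h}}$ with norm controlled only by the a-priori constants. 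Applying \cref{integralFinalEstimate} to $w$ at $p$ therefore produces
\[
k^2 \abs{\varphi(p)}\abs{u(p)} \leq \mathcal E (\ln K)^{(n+3)/2} K^{-\min(\alpha,\delta)/2}.
\]

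To close the argument, I bound $\abs{u(p)}$ from below exactly as in the proof of \cref{determinationThm}: by choosing $C_2$ sufficiently small, \cref{directScatterLowFreq} yields $\norm{u^s}_{L^\infty(B_{R_m})} \leq m_i/2$ whenever $k \leq C_2$, whence $\abs{u(p)} \geq \abs{u^i(p)} - \abs{u^s(p)} \geq m_i/2$. Together with $\abs{\varphi(p)} \geq m_p$, this forces $k^2 m_p m_i / 2 \leq \mathcal E (\ln K)^{(n+3)/2} K^{-\min(\alpha,\delta)/2}$, which is impossible once $K$ exceeds a sufficiently large $C_1$ depending only on the a-priori data. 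The main technical obstacle is the careful local-to-global bookkeeping: one must check that $p$ genuinely inherits its admissible $K$-curvature structure when we pass from $\partial\Omega$ to $\partial\tilde\Omega$, and that the a-priori H\"older bounds on $u$ and $\varphi u$ required by \cref{integralFinalEstimate} hold uniformly across the entire a-priori class (this is precisely where the uniform well-posedness built into \cref{directScatterLowFreq} is essential).
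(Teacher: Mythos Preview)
Your proposal is correct and follows essentially the same argument as the paper: define $w=u-u'$, use Rellich to get $w\in H^2_0$ of the filled-in domain, exploit the separation condition to reduce the source to $-k^2\varphi u$ in $\Omega_{b,h}$, apply \cref{integralFinalEstimate}, and close with the lower bound $\abs{u}\geq m_i/2$ from \cref{directScatterLowFreq} at small $k$. You also correctly read the separation hypothesis as $d(p,\Omega')>\sqrt{1+M}/K$ (the inequality in the statement is a typo), matching the paper's own use of it.
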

\begin{proof}
  Let $w=u-u'$. One has that $(\Delta+k^2)w=0$ in $\R^n \setminus
  \overline{\Omega\cup\Omega'}$ and thus by the Rellich lemma $w=0$ in
  $\Sigma$, where $\Sigma$ is the unbounded connected component of
  $\R^n \setminus \overline{\Omega\cup\Omega'}$. Set
  $U=\mathbb{R}^n\backslash\overline{\Sigma}$, then clearly $U \supset
  \Omega\cap\Omega'$. One easily sees that $w\in H^2_0(U)$ and
  $(\Delta+k^2)w = f$ in $U$ where
  \begin{equation}\label{eq:zzz0}
    f = \chi_\Omega\varphi - \chi_{\Omega'}\varphi'.
  \end{equation}

  Let $p\in\partial\Omega\setminus\Omega'$ be the admissible
  $K$-curvature point as stated in the theorem and consider the set
  $\Omega_{b,h}$ associated with $p$ as specified in
  \cref{geomDef}. Because $\Omega'$ is of a distance $\sqrt{1+M}/K$
  from $p$, it does not intersect the rectangular neighbourhood
  $B(0,b)\times {({-h,h})}$ that is used to define
  $\Omega_{b,h}$. Since $p$ can be joined to infinity without passing
  through $\Omega\cup\Omega'$, we have $p \in \partial U$ and actually
  $\Omega_{b,h} = U_{b,h}$. Moreover, we have $f_{|U_{b,h}} =
  \varphi_{|U_{b,h}}$.

  There holds
  \begin{equation}\label{eq:zzz1}
    \norm{f}_{C^\alpha(\overline{U_{b,h}})} =
    \norm{\varphi}_{C^\alpha(\overline{U_{b,h}})} \leq M_p.
  \end{equation}
  By \eqref{eq:zzz1}, one can apply \cref{integralFinalEstimate} to
  have
  \[
  \abs{f(p)} \leq \mathcal E (\ln K)^{(n+3)/2}
  K^{-\min(\alpha,\mu)/2}
  \]
  for some constant $\mathcal E = \mathcal
  E(\alpha,\mu,n,R_m,L,M,k,M_p) \in\R_+$. Taking the lower bound
  $m_p \leq \abs{\varphi}$ into account gives
  \begin{equation}\label{eq:zzz2}
    m_p \leq \mathcal E (\ln K)^{(n+3)/2} K^{-\min(\alpha,\mu)/2}
  \end{equation}
  which is impossible when $K$ is sufficiently small. This
  contradiction immediately yields that $\Omega\setminus\Omega'$
  cannot have an admissible $K$-curvature point as stated in the
  theorem. The proof is complete.
\end{proof}

\Cref{thm:final} states a local uniqueness result for the inverse
shape problem which basically indicates that if two sources produce
the same far-field pattern, then the difference of the two scatterers
cannot have a high-curvature point. On the other hand, if there is
sufficient a-priori knowledge about the shape of the underlying
scatterer, one can also obtain approximate global uniqueness. As an
illustrative example, one may consider an equilateral triangle in
$\mathbb{R}^2$ with the three vertices being locally mollified to be
admissible $K$-curvature points with sufficiently large $K$. Clearly,
if two such kind of scatterers produce the same far-field pattern,
then by \cref{thm:final} they are approximately the same in the sense
that their corresponding mollified vertices must be around distance
$K^{-1}$ from each other, respectively. Otherwise the difference of
the two scatterers would possess a high-curvature point.

Finally, we briefly remark on extending
\cref{determinationThm,thm:final} to the inverse medium scattering
problem associated with \eqref{eq:helm1}. The inverse problem can be
described as uniquely identifying $\Omega$, independent of $V$, by
knowledge of the corresponding far-field pattern $u^s_\infty(\hat x;
u^i)$. Similar to the source scattering case, by introducing an
abstract operator $\mathscr{T}$ defined via \eqref{eq:helm1} that
sends the scatterer $(\Omega, V)$ to the corresponding far-field
pattern, the inverse problem can formulated as the following nonlinear
equation,
\begin{equation}\label{eq:ipn1}
  \mathscr{T}(\Omega, V)=u_\infty^s(\hat x; u^i). 
\end{equation}
The corresponding uniqueness issue can be cast as deriving sufficient
conditions to guarantee that $\mathscr{T}(\Omega,
V)=\mathscr{T}(\Omega', V')$ only if $\Omega=\Omega'$, independent of
$V$ and $V'$. By noting that Helmholtz equation in \eqref{eq:helm1}
can be written as
\begin{equation}\label{eq:fff1}
  (\Delta+k^2) u=\chi_\Omega \varphi,\quad \varphi:=-k^2 V u,
\end{equation}
the study inverse medium problem \eqref{eq:ipn1} can obviously be
reduced to that of the inverse source problem \eqref{eq:ipn2}, and
hence the uniqueness results in \cref{determinationThm,thm:final} can
be extended to the medium scattering case by suitable modifications.

\section*{Acknowledgement}

The work of E.~Bl{\aa}sten was supported by the Academy of Finland
(decision 312124) and in part from the Estonian grant PRG 832.

The work of H.~Liu was supported by a startup fund City University of Hong Kong and the Hong Kong RGC general research funds (projects 12302017, 12301218, 12302919).

\end{document}